\documentclass[11pt]{amsart}
\usepackage{amsmath}
\usepackage{amsfonts}
\usepackage{amssymb}
\usepackage[all,cmtip]{xy}           

\usepackage{dsfont}

\usepackage{bbm}
\usepackage{bbding}
\usepackage{txfonts}
\usepackage[shortlabels]{enumitem}
\usepackage{ifpdf}
\ifpdf
\usepackage[colorlinks,linkcolor=black,final,backref=page,hyperindex]{hyperref}
\else
\usepackage[colorlinks,linkcolor=black,final,backref=page,hyperindex,hypertex]{hyperref}
\fi
\usepackage{amscd}
\usepackage{tikz-cd}
\usepackage[active]{srcltx}
\usepackage{mathrsfs}
\usepackage{ulem}
\makeatletter
\newtheorem{theorem}{Theorem}[section]
\newtheorem{proposition}[theorem]{Proposition}

\theoremstyle{definition} 
\newtheorem{example}[theorem]{Example}
\newtheorem{definition}[theorem]{Definition}
\newtheorem{remark}[theorem]{Remark}

\newcommand{\vv}{\mathsf{v}}
\newcommand{\ww}{\mathsf{w}}

\newcommand{\qa}{kQ/I}

\newcommand{\rad}{\mathrm{rad}}

\begin{document}
	
	\title[Invariants of derived equivalences for admissible fractional Brauer graph algebras]{
	Invariants of derived equivalences for admissible fractional Brauer graph algebras}

	\author{Bohan Xing} 
	\address{(Bohan Xing)
		School of Mathematical Sciences,
		Laboratory of Mathematics and Complex Systems,
		Beijing Normal University,
		Beijing 100875,
		P.R.China}
	\email{bhxing@mail.bnu.edu.cn}

	\date{\today}

	\begin{abstract}
	Characterizing derived equivalences between algebras via combinatorial structures has recently become a popular topic. In this paper, we study admissible fractional Brauer graph algebras, a new subclass of self-injective special biserial algebras, and provide several easily checkable combinatorial invariants for derived equivalences between them. In particular, we show that these algebras can be viewed as repetitive algebras and $r$-fold trivial extensions of gentle algebras.
	\end{abstract}

	\renewcommand{\thefootnote}{\alph{footnote}}
\setcounter{footnote}{-1} \footnote{\it{Mathematics Subject
		Classification(2020)}: 16G20, 16G10, 16D50.}
\renewcommand{\thefootnote}{\alph{footnote}}
\setcounter{footnote}{-1} \footnote{\it{Keywords}: Admissible fractional Brauer graph algebra; Derived equivalence; repetitive algebra; $r$-fold trivial extension.}
	\maketitle
	

	\allowdisplaybreaks
	
	\section{Introduction}

	Derived equivalence is a classical topic in the representation theory of algebras. Two derived equivalent algebras share similar homological properties. By \cite{Ric,Kel}, a fundamental method for determining whether two algebras are derived equivalent is to find a suitable tilting complex and compute its endomorphism algebra. Although this approach always works in theory, in practice it often involves extensive calculations. Therefore, we generally hope that within special classes of algebras, derived equivalence can be characterized by certain complete invariants that are easy to verify.

In recent years, the study of certain classical finite-dimensional algebras using combinatorial structures and geometric models has become increasingly popular. For these algebras, derived equivalence can often be completely described by combinatorial or geometric data. For example, derived equivalence of gentle algebras can be determined by their geometric models \cite{AG,APS,JSW}, and derived equivalence of Brauer graph algebras (abbr. BGAs) can be determined by their ribbon graphs and multiplicity functions \cite{A,AZ,OZ}.

	Although many deep results are known for well-structured symmetric algebras such as BGAs, analogous characterizations have long been unavailable for self-injective algebras in general. 
	
	In order to study self-injective algebras via a similar combinatorial approach, \cite{LL} generalized BGAs to fractional BGAs. In general, fractional BGAs are neither special biserial nor of tame representation type \cite{X}. Consequently, in \cite{LL3}, the authors considered a subclass, namely fractional BGAs of type MS on which the Nakayama automorphism acts admissibly; these algebras are self-injective and special biserial. Here, ``MS'' stands for ``multiserial and self-injective'' \cite{GS1}. In this article, for simplicity, we refer to such algebras as {\it admissible fractional BGAs} (abbr. AFBGAs).
As in the case of BGAs, AFBGs are defined from combinatorial data --- a ribbon graph $\Gamma$ equipped with a (fractional) multiplicity function $m=\frac{d}{val}$ (see Definition \ref{fms-BG} for details) --- called an admissible fractional Brauer graph (abbr. AFBG). 

In fact, we will prove in Proposition \ref{prop:determined-by-BG} that AFBGAs are uniquely determined by their combinatorial data. We illustrate these concepts with the following example.

	\begin{example}\label{exa:preproj-A3}
		Consider the following ribbon graph $\Gamma$, consisting of two vertices $\vv$ and $\ww$ and three edges $1,2,3$, embedded in a torus with two punctures. Without loss of generality, we assume that the cyclic ordering around each vertex is clockwise.

\begin{center}

\tikzset{every picture/.style={line width=0.75pt}} 

\begin{tikzpicture}[x=0.75pt,y=0.75pt,yscale=-1,xscale=1]

\draw  [fill={rgb, 255:red, 0; green, 0; blue, 0 }  ,fill opacity=1 ] (80,95) .. controls (80,92.24) and (82.24,90) .. (85,90) .. controls (87.76,90) and (90,92.24) .. (90,95) .. controls (90,97.76) and (87.76,100) .. (85,100) .. controls (82.24,100) and (80,97.76) .. (80,95) -- cycle ;
\draw  [fill={rgb, 255:red, 0; green, 0; blue, 0 }  ,fill opacity=1 ] (180,95) .. controls (180,92.24) and (182.24,90) .. (185,90) .. controls (187.76,90) and (190,92.24) .. (190,95) .. controls (190,97.76) and (187.76,100) .. (185,100) .. controls (182.24,100) and (180,97.76) .. (180,95) -- cycle ;
\draw [line width=1.5]    (85,95) -- (185,95) ;
\draw [line width=1.5]    (85,95) .. controls (150.25,60.25) and (230.25,61.25) .. (185,95) ;
\draw [line width=1.5]    (85,95) .. controls (39.75,60.25) and (119.75,60.25) .. (185,95) ;

\draw (65.25,90) node [anchor=north west][inner sep=0.75pt]   [align=left] {$\vv$};
\draw (194.25,90) node [anchor=north west][inner sep=0.75pt]   [align=left] {$\ww$};
\draw (75.25,55) node [anchor=north west][inner sep=0.75pt]   [align=left] {$1$};
\draw (186.25,55) node [anchor=north west][inner sep=0.75pt]   [align=left] {$2$};
\draw (128.75,99.5) node [anchor=north west][inner sep=0.75pt]   [align=left] {$3$};
\end{tikzpicture}

	\end{center}
	As in the case of BGAs, the above ribbon graph $\Gamma$ naturally induces a quiver $Q_\Gamma$ as follows, whose vertices correspond to the edges of $\Gamma$, and whose arrows are determined by the cyclic ordering around the vertices of $\Gamma$.
	$$
\begin{tikzcd}
                                                                         & 2 \arrow[rd, "\alpha_2", shift left] \arrow[rd, "\beta_2"', shift right] &                                                                                                \\
1 \arrow[ru, "\alpha_1", shift left] \arrow[ru, "\beta_1"', shift right] &                                                                          & 3 \arrow[ll, "\beta_3"', bend left, shift right] \arrow[ll, "\alpha_2", bend left, shift left]
\end{tikzcd}$$
Here $\alpha$ denotes the arrows induced by the cyclic ordering around the vertex $\vv$, while $\beta$ denotes the arrows induced by the cyclic ordering around the vertex $\ww$.

In contrast to the multiplicity function for BGAs, which specifies how many times a path may wind around a given vertex, \cite{LL} define a {\it degree function} in the setting of AFBGAs, which records the maximal number of steps around a vertex, or equivalently, the maximal length of such a path.
Therefore, for a BGA, the degree $d(\vv)$ of each vertex $\vv$ in the ribbon graph is given by the product $d(\vv)=m(\vv)\,val(\vv)$, where $val(\vv)$ denotes the valency of $\vv$. Thus, one may regard $(\Gamma,d)$ as defining a BGA provided that the degree function $d$ is divisible by the valency function $val$.

{\it Thus, to avoid confusion with the multiplicity function $m$, throughout this paper we consider a pair $(\Gamma,d)$, where $d$ is understood to be the degree function unless otherwise specified.}

For instance, If we consider the degree function $d$ on $\Gamma$ defined by
$$
d(\vv)=d(\ww)=2,
$$
which corresponds to the multiplicity function $m$ on $\Gamma$ given by
$$
m(\vv)=m(\ww)=\frac{2}{3},
$$
then $(\Gamma,d)$ forms a AFBG, and the associated algebra admits the following commutative relations:
$$
\alpha_2\alpha_1=\beta_2\beta_1,
\quad
\alpha_3\alpha_2=\beta_3\beta_2,
\quad
\alpha_1\alpha_3=\beta_1\beta_3.
$$
As in the case of BGAs, we further impose ``gentle relations'' of the form $$\alpha\beta
=
\beta\alpha
=
0.$$ When the degree function $d$ satisfies certain additional conditions as in Definition~\ref{fms-BG} (ensuring that the paths appearing in the above relations have the same source and terminus), the resulting algebra $A$ associated with $(\Gamma,d)$ is a finite-dimensional self-injective special biserial algebra. The indecomposable projective $A$-modules are given by
			$$P_1=\begin{array}{*{1}{lll}}
			&1&\\2&&2\\&3&
		\end{array},\quad P_2=\begin{array}{*{1}{lll}}
			&2&\\3&&3\\&1&
		\end{array},P_3=\begin{array}{*{1}{lll}}
			&3&\\1&&1\\&2&
		\end{array}.$$
The Nakayama automorphism $\nu_A$ of the self-injective algebra $A$, given by
$$
e_i \mapsto e_{i+1},\quad
\alpha_i \mapsto \alpha_{i+1},\quad
\beta_i \mapsto \beta_{i+1},
\qquad i\in \mathbb{Z}/3\mathbb{Z}.
$$
is naturally corresponding to the following automorphism of $\Gamma$:
$$
1\mapsto 2,\quad
2\mapsto 3,\quad
3\mapsto 1,
$$
which is called the {\it Nakayama automorphism} $\nu$ of $(\Gamma,d)$. See Subsection \ref{subsec:def-fms-BGA} for details. 
	\end{example}

Similar to the case of gentle algebras \cite{SZ} and BGAs \cite{AZ}, AFBGAs are expected to be closed under derived equivalence (which is left as a conjecture in \cite{X2}). In particular, the representation-finite part \cite{LL3} and the tilting-discrete part \cite{X2} of such algebras are closed under derived equivalence.

Therefore, in this paper, we aim to provide some easily checkable invariants for derived equivalences within this class of algebras with combinatorial structures. The proofs of these invariants differ from the classical case of BGAs in \cite{A,AZ}, because the existence of a non-trivial Nakayama automorphism makes the classical arguments more complicated (in some cases, they even fail to work). Since in \cite{LL3} each AFBGA $A$ is associated with a BGA $A_{\mathrm{red}}$ (called the {\it reduced form} of $A$), our strategy is to use covering techniques on derived categories \cite{Asa}, which were used in \cite{Asa1} to classify representation-finite self-injective algebras. This allows us to construct derived equivalences in the BGA setting and, consequently, obtain combinatorial invariants as follows (since derived equivalence coincides with Morita equivalence for local algebras \cite{RZ}, we may restrict our discussion to the non-local case).

\begin{theorem}[see Theorem~\ref{thm:derived-eqv-between-BGAs}]
Let $A$ and $B$ be non-local AFBGAs associated with AFBGs $(\Gamma, d)$ and $(\Gamma', d')$, respectively. If $A$ and $B$ are derived equivalent, then the following statements hold:
\begin{enumerate}
\item The associated BGAs $A_{\mathrm{red}}$ and $B_{\mathrm{red}}$ are derived equivalent;
\item The following conditions are satisfied:
\begin{itemize}
\item $\Gamma$ and $\Gamma'$ have the same number of vertices and edges;
\item $(\Gamma,d)$ and $(\Gamma',d')$ have the same multisets of vertex multiplicities;
\item either both $\Gamma$ and $\Gamma'$ are bipartite, or neither is.
\end{itemize}
\end{enumerate}
\end{theorem}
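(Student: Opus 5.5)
The plan is to transfer the derived equivalence from $A$ and $B$ to the reduced forms $A_{\mathrm{red}}$ and $B_{\mathrm{red}}$ by Asashiba's covering techniques for derived categories \cite{Asa}, and then read off the combinatorial data from the known derived invariants of BGAs \cite{A,AZ}.

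\textbf{Step 1 (covering set-up).} By \cite{LL3}, the AFBGA $A$ is a Galois-type quotient of a locally bounded category $\tilde A$ by a group $G\cong\mathbb{Z}$. The category $\tilde A$ is the repetitive category of a gentle algebra, or an unrolling of $A_{\mathrm{red}}$. The generator of $G$ acts through a power of the Nakayama automorphism, twisted by the admissible automorphism $\nu$ of $(\Gamma,d)$. The reduced form $A_{\mathrm{red}}$ should arise as the orbit category of the same $\tilde A$ under a related group, whose generator is the Nakayama shift without the twist by $\nu$.

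I will first establish this common covering $\tilde A\to A$ and $\tilde A\to A_{\mathrm{red}}$. This is essentially the statement, announced in the abstract, that these algebras are $r$-fold trivial extensions of gentle algebras.

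\textbf{Step 2 (lifting and descending).} Given a derived equivalence between $A$ and $B$, I would use Asashiba's theorem, which says that derived equivalence class and the type $(\text{covering class},\text{frequency},\text{torsion order})$ are linked. The key requirement is that a tilting complex $T$ over $A$ with $\operatorname{End}(T)\cong B$ can be chosen standard and $G$-stable. It then lifts to a $G$-equivariant tilting complex $\tilde T$ over $\tilde A$ with $\operatorname{End}(\tilde T)\cong\tilde B$.

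Since $\tilde T$ is equivariant for the Nakayama functor (which commutes with any derived equivalence), it is also stable under the subgroup used to form $A_{\mathrm{red}}$. Pushing $\tilde T$ down along $\tilde A\to A_{\mathrm{red}}$ should then give a tilting complex with endomorphism algebra $B_{\mathrm{red}}$. This proves (1).

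\textbf{Main obstacle.} This step is where I expect the main difficulty. Asashiba's results need the derived equivalence to be \emph{standard} and the tilting complex to be \emph{$G$-stable}, i.e.\ $^{g}T\cong T$ for the twisted automorphism. Standardness of derived equivalences is known for self-injective special biserial algebras, or can be arranged via Rickard's two-sided tilting complexes.

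The harder point is $G$-stability. I would argue as follows: any tilting complex is stable under the Nakayama functor $\nu_A$ up to shift, and the twist $\nu$ acts on $\tilde A$ by a graph automorphism. One then shows that $\nu$ is determined by $\nu_A$ together with a power of the Serre/shift structure, so that stability under $\nu_A$ gives stability under the twisted generator. If this fails in general, the fallback is to argue through the derived Picard group and the fact that $\nu_A$ is central in it.

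\textbf{Step 3 (combinatorial consequences).} Once (1) is established, (2) follows from the invariants of BGAs in \cite{A,AZ}. Derived equivalent BGAs have ribbon graphs with the same number of vertices and edges, the same multiset of multiplicities, and the same bipartiteness.

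It remains to check that $A$ and $A_{\mathrm{red}}$ carry the same data. By the construction in \cite{LL3}:
\begin{itemize}
\item $A_{\mathrm{red}}$ has the same number of simples as $A$, giving the same edge count;
\item its vertices correspond bijectively to those of $\Gamma$;
\item its multiplicities are the fractional multiplicities $d/val$ read appropriately;
\item bipartiteness is preserved.
\end{itemize}
I will verify each of these directly from the definition of the reduced form. If the multiplicities of $A_{\mathrm{red}}$ are instead rescaled, then a uniform rescaling factor, determined by the order of $\nu$, which is itself a derived invariant via the Nakayama functor's order, still gives equality of the multisets for $A$ and $B$.

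The non-local hypothesis is used to exclude the degenerate case in which the Nakayama twist is not visible.
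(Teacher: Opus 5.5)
You have a genuine gap in Step 2, and it comes from using the wrong covering.

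You route the argument through a $\mathbb{Z}$-cover $\tilde A$. Incidentally, $\tilde A$ is the repetitive algebra of a gentle algebra only when $m\equiv 1/r$ (cf.\ Theorem~\ref{thm:r-fold-tri}). You then assert that a tilting complex $T$ over $A=\tilde A/G$ lifts to one over $\tilde A$. Asashiba's covering theorem only goes downward: a $G$-stable tilting subcategory of $K^b(\mathrm{proj}\,\tilde A)$ pushes down to a tilting subcategory over $\tilde A/G$. Lifting needs $T$ to be gradable, i.e.\ in the essential image of the push-down functor. This is not automatic, and you do not address it. The condition you do discuss, ``$G$-stability of $T$'', does not make sense for a complex over $A$, since $G$ acts on $\tilde A$, not on $A$. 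Also, the ``type'' classification you invoke is specific to representation-finite self-injective algebras.

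The missing observation is that neither a $\mathbb{Z}$-cover nor a ``twist'' is needed:
\begin{enumerate}
\item The automorphism $\nu$ of $(\Gamma,d)$ induces the inverse of the Nakayama automorphism $\nu_A$ of $A$ (Proposition~\ref{prop:bfBGA-basic}).
\item By the second admissibility condition, $\nu$ acts freely on primitive idempotents.
\item $\Gamma\to\Gamma/\langle\nu\rangle$ is a covering, so $A$ itself is a Galois covering of $A_{\mathrm{red}}=A/\langle\nu_A\rangle$ with finite cyclic group. In your own set-up, the group defining $A$ is a subgroup of the one defining $A_{\mathrm{red}}$, not conversely, and the quotient is exactly $\langle\nu_A\rangle$.
\item The tilting subcategory $E\subset K^b(\mathrm{proj}\,A)$ with $E\simeq B$ is automatically $\nu_A$-stable, because $A$ is self-injective and derived equivalences commute with Nakayama functors.
\item The induced action on $E\simeq B$ is that of $\nu_B$, so pushing down from $A$ gives $A_{\mathrm{red}}$ derived equivalent to $E/\langle\nu_A\rangle\simeq B/\langle\nu_B\rangle=B_{\mathrm{red}}$.
\end{enumerate}
This is the paper's argument. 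The ingredient you already had, Nakayama-stability of tilting complexes, is enough once the right covering is used.

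Step 3 also contains a false claim: $A_{\mathrm{red}}$ does not have the same number of simples as $A$. Edges of $\Gamma_{\mathrm{red}}$ are $\nu$-orbits of edges of $\Gamma$, each of size $r=|\langle\nu\rangle|$. Hence $|E(\Gamma)|=r\,|E(\Gamma_{\mathrm{red}})|$; in Example~\ref{exa:preproj-A3} this is three edges versus one. Likewise $m_{\mathrm{red}}=r\,m$ at every vertex. So your ``fallback'' is in fact required, and you must justify $r_A=r_B$. The paper gets this from the conjugacy of Nakayama permutations of derived equivalent self-injective algebras. The edge count should come directly from the number of simple $A$-modules being a derived invariant. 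Vertex count and bipartiteness do transfer as you say, because $\Gamma$ and $\Gamma_{\mathrm{red}}$ have the same vertex set and the quotient preserves endpoints of edges.
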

\noindent
Note that in Remark \ref{rmk:1} we point out that the above invariants are not complete, and in Remark \ref{rmk:2} we propose a potential complete invariant for derived equivalence.

	It is often observed that many results for gentle algebras also extend to BGAs, mainly because they are related via trivial extensions \cite{Rin2,Sch}. In this paper, we also show that AFBGAs exhibit a similar connection with gentle algebras. Indeed, the relationships among AFBGAs, BGAs, and gentle algebras can be illustrated by the following diagram.

	\begin{center}
\begin{tikzcd}
    &  &  & \text{AFBGAs with $m\equiv \frac{1}{r}$} \arrow[d, "\text{$r$-covering (\cite{LL3})}"] \\
\text{Gentle algebras} \arrow[rrru, "\text{$r$-fold trivial extension (Theorem \ref{thm:r-fold-tri})}"] 
\arrow[rrr, "\text{trivial extension (\cite{Sch})}"'] 
&  &  & \text{BGAs with $m\equiv 1$}
\end{tikzcd}
	\end{center}

	\medskip
	\textbf{Outline.}\; In Section \ref{sec:BGA}, we recall some classical results on AFBGAs and prove that, up to isomorphism, AFBGAs are uniquely determined by their AFBGs. In Section \ref{sec:3}, we construct the repetitive algebras and the $r$-fold trivial extensions of gentle algebras, and show that they are all AFBGAs, that is, they can be described by ribbon graphs together with multiplicity functions. In Section \ref{sec:4}, we present invariants for derived equivalences between AFBGAs.

	\section*{Acknowledgments}
	The author sincerely thanks Aaron Chan for his supervision at Nagoya University, many fruitful discussions and comments on this work. The author also thanks Pengyun Chen, Nengqun Li and Yuming Liu for helpful discussions. This work is supported by the China Scholarship Council (No. 202506040127).

	\section*{Notation}
	
		Throughout we assume that $k$ is an algebraically closed field and all algebras considered are $k$-algebras. Unless stated otherwise, all modules will be finitely generated left modules. Furthermore, we say that $A$ is a bound quiver algebra, if $A\cong \qa$, where $Q$ is a finite quiver and $I$ is an admissible ideal in the path algebra $kQ$. We denote by $s(p)$ the source vertex of a path $p$ and by $t(p)$ its terminus vertex. We will write paths from right to left, for example, $p=\alpha_{n}\alpha_{n-1}\cdots\alpha_{1}$ is a path with starting arrow $\alpha_{1}$ and ending arrow $\alpha_{n}$. A path is called a cycle if $s(p)=t(p)$.
	By abuse of notation we sometimes view an element in $kQ$ as an element in the quotient $\qa$ if no confusion can arise.

	In this paper, we study the indecomposable $A$-modules $M$ via their {\it Loewy structure}, which is represented by a diagram where the $i$-th row corresponds to the simple summands of the completely reducible module $\rad^{i-1}(A)M/\rad^{i}(A)M$ with $\rad(A)$ the Jacobson radical of $A$. Each number in the diagram denotes a distinct simple module in $A$. For further details, see for example in \cite[Page 174]{Ben}.

	Recall that for an algebra $A$, the {\it derived category} $\mathcal{D}(A)$ is the localization of homotopy category $\mathcal{K}(A)$ by inverting quasi-isomorphisms. We say that two algebras are {\it derived equivalent} if their derived  categories are equivalent as triangulated categories.

	A $k$-algebra $A$ is called {\it self-injective} if $A_A$ is an injective $A$-module; and {\it symmetric} if $_AA_A\cong DA$ as $A$-$A$-bimodules. Indeed, symmetric algebras are naturally self-injective (see for example in \cite{Z}).
	
	A $k$-algebra $A$ is {\it special biserial} if it is isomorphic to an algebra of the form $\qa$ where $kQ$ is a path algebra and $I$ is an admissible ideal such that the following properties hold.
	
	\begin{enumerate}[(1)]
		\item At every vertex $i$ in $Q$, there are at most two arrows starting at $i$ and there are at most two arrows ending at $i$.
		
		\item For every arrow $\alpha$ in $Q$, there exists at most one arrow $\beta$ such that $\beta\alpha\notin I$ and there exists at most one arrow $\gamma$ such that $\alpha\gamma\notin I$.
	\end{enumerate}

	\section{Admissible fractional Brauer graph algebras}\label{sec:BGA}
		
	\subsection{Ribbon graphs}
	\
	
	Ribbon graphs combinatorially encode the structure of oriented surfaces with boundary (see for example in \cite[Section 1.1]{OZ}). A key feature of ribbon graphs is the cyclic ordering of (half-)edges at each vertex, which captures the orientation data of the underlying surface. We begin this section by recalling their formal definition.
	
	\begin{definition}\label{def:ribbon-graph}
		A {\it ribbon graph} is a tuple $\Gamma=(V,H,s,\iota,\rho)$, where
		\begin{enumerate}[(1)]
			\item $V$ (also denoted by $V(\Gamma)$) is a finite set whose elements are called vertices;
			
			\item $H$ (also denoted by $E(\Gamma)$) is a set whose elements are called half-edges;
			
			\item $s: H\rightarrow V$ is a functions;
			
			\item $\iota: H\rightarrow H$ is an involution without fixed points;
			
			\item $\rho: H\rightarrow H$ is a permutation whose cycles correspond to the sets $H_v:=s^{-1}(v)$, $v\in V$.
			
		\end{enumerate}
	The $\iota$-orbits are called the edges of $\Gamma$. In particular, if we set
\[
E(\Gamma) := H/\langle \iota \rangle,
\]
then $(V(\Gamma), E(\Gamma))$ is the underlying combinatorial graph of $\Gamma$.
	\end{definition}
	
	For a ribbon graph $\Gamma=(V,H,s,\iota,\rho)$, we introduce the following notation. 
For each half-edge $h \in H$, we write
\[
h^+ := \rho(h) \quad \text{and} \quad h^- := \rho^{-1}(h)
\]
for the successor and predecessor of $h$, respectively. We denote by
\[
\bar{h} := \{h, \iota(h)\}
\]
the edge associated with $h$ in the underlying combinatorial graph of $\Gamma$.
For each vertex $\vv \in V$, the valency of $\vv$ is defined by
\[
val(\vv) := \bigl|\{h \in H \mid s(h) = \vv\}\bigr|.
\]
In particular, a loop (that is, an edge $\{h, \iota(h)\}$ with $s(h) = s(\iota(h))$) contributes twice to $val(\vv)$.

	Unless stated otherwise, we will assume that $\Gamma$ is connected, i.e. its underlying graph is connected.
	
	\begin{remark}
		Because we will later consider repetitive algebras, which have infinitely many simple modules, we assume in the definition of ribbon graphs that the set of half-edges $H$ is allowed to be infinite.
	\end{remark}

	\subsection{Admissible fractional Brauer graph algebras}\label{subsec:def-fms-BGA}
	\
	
	In this section, we review some basic knowledge about admissible fractional Brauer graph algebras, which is constructed in \cite{LL}. 
	
	\begin{definition}\textnormal{(cf. \cite{LL3, X2})}\label{fms-BG}
		An \textit{admissible fractional Brauer graph} (abbr. AFBG) is a pair $(\Gamma,d)$ consisting of a ribbon graph $\Gamma$ together with a {\it degree function} $d : V(\Gamma) \to \mathbb{Z}_{>0}$ (whose values are referred to as the {\it degrees}), such that for each half-edge $h \in H(\Gamma)$,
		\begin{itemize}
		\item  $\iota\!\left(\rho^{d(s(h))}(h)\right) \;=\; \rho^{d(s(\iota(h)))}(\iota(h))$;
		\item $\rho^{nd(s(h))}(h)\neq \iota(h)$, for all $n\in \mathbb{Z}$.
		\end{itemize}
	\end{definition}
	We often omit $d$ from the notation and simply refer to $\Gamma$ as an AFBG when no confusion arises. For a vertex $\vv$, we define the {\it multiplicity} of $\vv$ (also called the fractional degree in \cite{LL}) to be the rational number
\[
m(\vv) := \frac{d(\vv)}{val(\vv)}.
\]
It follows immediately from the definition that an AFBG is a (classical) Brauer graph (abbr. BG) (for an explicit definition, see for example in \cite{SS,OZ}) if and only if the multiplicity of every vertex is an integer. A vertex $\vv$ is called {\it truncated} if $d(\vv)=1$.
	Denote by $\nu$ the map
\begin{equation*}
\nu: H \longrightarrow H, \qquad
h \longmapsto \rho^{d(s(h))}h,
\end{equation*}
which is called the {\it Nakayama automorphism} of an AFBG $(\Gamma,d)$.
	
	To each AFBG $(\Gamma,d)$, one can associate a ($2$-regular) quiver $Q=Q_\Gamma$ and an ideal of relations $I=I_{\Gamma,d}$ in the path algebra $kQ$ as follows.
	
	\begin{enumerate}[(1)]
		\item The vertices of $Q$ correspond to the edges of $\Gamma$ and for every $h\in H$, there is an arrow $\alpha_h:\bar{h}\rightarrow\overline{h^+}$. The assignment $\alpha_h\mapsto \alpha_{h^+}$ defines a permutation $\rho=\rho_\Gamma$ of the arrows of $Q$ whose orbits are in bijection with vertices in $\Gamma$. Hence every arrow $\alpha$ defines a cycle
		$$C_\alpha=\alpha\rho(\alpha)\cdots\rho^l(\alpha)$$
		where $l+1$ denotes the cardinality of the $\pi$-orbit of $\alpha$. Every vertex of $Q$ is the starting point of at most two cycles of form $C_\alpha$. If $\alpha=\alpha_h$, set $d(\alpha):=d(s(h))$.
		
		\item The ideal $I$ is generated by the following set of relations:
		\begin{enumerate}[(i)]
			\item $$\alpha\rho(\alpha)\cdots\rho^{d(\alpha)-1}(\alpha)=\beta\rho(\beta)\cdots\rho^{d(\beta)-1}(\beta),$$
			where $\alpha,\beta\in Q_1$ and $s(\alpha)=s(\beta)$, that is $\alpha$ and $\beta$ start at the same edge of $\Gamma$.
			
			\item $$\alpha\beta=0,$$
			where $\alpha,\beta\in Q_1$ are composable and $\rho(\alpha)\neq \beta$;
		\end{enumerate}
	\end{enumerate}
	
	\begin{definition}\textnormal{(cf. \cite[Definition 2.2 and Proposition 5.5]{LL3})}\label{fms-BGA}
		A $k$-algebra $A$ is called a {\it admissible fractional Brauer graph algebra} (abbr. AFBGA) if there exists an AFBG $(\Gamma,d)$ such that $A\cong kQ_\Gamma/I_{\Gamma,d}$ as $k$-algebras.
	\end{definition}
	
	Note that $A$ is indecomposable as a ring if and only if $\Gamma$ is connected, and $A$ is a (classical) Brauer graph algebra (abbr. BGA) if and only if $m(\vv)$ is an integer for every vertex $\vv \in V(\Gamma)$. In fact, BGAs are coincide with symmetric special biserial algebras \cite{Sch}. Moreover, AFBGAs satisfy the following elementary properties; see~\cite{LL} if one needs details.

	\begin{proposition}\label{prop:bfBGA-basic}
Let $A$ be an AFBGA with associated AFBG $(\Gamma,d)$, and let $\nu$ be the Nakayama permutation of $(\Gamma,d)$. Then $A$ is self-injective and special biserial.
Moreover, the Nakayama automorphism of $A$ is given by the map
\begin{equation*}
\nu_A \colon A \to A,\qquad 
\nu_A(\bar{h})=\overline{\rho^{-d(s(h))}(h)}, \quad 
\nu_A(\alpha_h)=\rho^{-d(s(h))}(\alpha_h),
\end{equation*}
which is induced by the Nakayama automorphism of $(\Gamma,d)$. 
Here $\bar{h}$ is identified with the primitive idempotent $e_{\bar{h}}$.
\end{proposition}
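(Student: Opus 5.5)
The plan is to verify the three claims of Proposition~\ref{prop:bfBGA-basic} directly on the bound quiver presentation $kQ_\Gamma/I_{\Gamma,d}$. The main tool is an explicit basis of $A$ made of paths along the $\rho$-cycles.

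\textbf{Step 1: a basis of $A$.} For each half-edge $h$, let $p_{h,j}$ denote the path $\alpha_h,\alpha_{h^+},\dots$ of length $j$ along the $\rho$-orbit of $\alpha_h$. The relations of type (ii) kill every path that changes $\rho$-orbit. The relations of type (i) identify the two maximal paths $p_{h,d(s(h))}$ and $p_{\iota h,d(s(\iota h))}$ starting at $\bar h$. The first axiom in Definition~\ref{fms-BG} is exactly what makes this identification well-typed: it says these two paths end at the same edge, namely $\overline{\nu(h)}=\overline{\nu(\iota h)}$. The second axiom guarantees that no proper subpath of a maximal path is already maximal for the other half-edge. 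Together these should give the basis
\[
\{e_{\bar h}\}\;\cup\;\{p_{h,j}: 1\le j<d(s(h))\}\;\cup\;\{p_{h,d(s(h))}\}/\!\sim .
\]
I would first check that any path of length $d(\alpha)+1$ vanishes, by combining (i) with (ii). It follows that $I$ is admissible and $A$ is finite dimensional.

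\textbf{Step 2: special biserial and self-injective.} $Q_\Gamma$ is $2$-regular, since each edge has two half-edges. For each arrow $\alpha_h$, the only arrow $\beta$ with $\beta\alpha_h\notin I$ is $\alpha_{h^+}$ (here I compose paths in the order the relations are written). This gives the special biserial conditions.

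By Step 1, the projective $Ae_{\bar h}$ is biserial with a simple socle, spanned by the common maximal path ending at $\overline{\nu(h)}$. I would then show that $\bar h\mapsto\overline{\nu(h)}$ is a permutation of the vertices; this holds because $\nu$ is a bijection on $H$ compatible with $\iota$ by the first axiom. A basic algebra whose indecomposable projectives have simple socles, with socles permuting the simples, is self-injective.

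\textbf{Step 3: the Nakayama automorphism.} This is the main obstacle. I would construct a nondegenerate associative bilinear form $\langle a,b\rangle=\varphi(ab)$, where $\varphi$ is the linear functional sending each maximal path to $1$ and all other basis paths to $0$. The Nakayama automorphism is then determined by $\varphi(ab)=\varphi(\nu_A(b)a)$ (or $\varphi(b\,\nu_A(a))$, depending on convention). Take $a=p_{h,j}$. Its complement in a maximal path ending at $\overline{\nu(h)}$ sits on the same $\rho$-orbit, but is shifted by $d$ steps. Comparing the two factorizations of the maximal path yields $\nu_A(\alpha_h)=\rho^{-d(s(h))}(\alpha_h)$ on arrows and $e_{\bar h}\mapsto e_{\overline{\rho^{-d}h}}$ on idempotents.

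The delicate point is showing that this assignment respects the relations. It maps relations of type (i) to type (i), because $\rho^{-d}$ commutes with $\rho$ along each orbit and is compatible with $\iota$ by the first axiom. It maps type (ii) to type (ii), because it preserves $\rho$-adjacency. Finally, I need the signs and scalars to be trivial, so that no twisting by a character is needed; this holds because the relations in (i) have coefficient $1$.
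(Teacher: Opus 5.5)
The paper gives no argument of its own for this proposition: it records these as elementary properties and refers to \cite{LL}. Your direct verification follows the standard route. It uses a path basis, then simple socles permuted by $\bar h\mapsto\overline{\nu(h)}$ together with a dimension count, then a Frobenius form $\varphi$ normalized to $1$ on maximal paths, from which $\nu_A=\rho^{\mp d}$ is read off. The basis in Step 1 and the computation in Step 3 are sound in outline. However, one claim in Step 1 is false, and it breaks the special biserial part of Step 2.

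The sentence ``it follows that $I$ is admissible'' fails whenever $\Gamma$ has a truncated vertex, which Definition~\ref{fms-BG} allows. If $d(s(h))=1$, relation (i) at $\bar h$ reads $\alpha_h=p_{\iota h,d(s(\iota h))}$, so $I$ is not contained in the square of the arrow ideal.
\begin{itemize}
\item For one edge joining valency-one vertices $\vv,\ww$ with $d(\vv)=1$ and $d(\ww)=2$, you get $\alpha_h=\alpha_{\iota h}^2$ and $A\cong k[x]/(x^3)$.
\item If both ends of an edge are truncated, two parallel arrows are identified.
\end{itemize}
Finite dimensionality is unaffected. But special biserial is defined via an admissible presentation, so checking the two conditions on $(Q_\Gamma,I_{\Gamma,d})$ proves nothing in this case. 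The repair has three parts:
\begin{enumerate}
\item delete the arrows around truncated vertices, keeping one arrow from each identified parallel pair;
\item use your Step 1 basis to show that the remaining arrows form a basis of $\mathrm{rad}A/\mathrm{rad}^2A$;
\item recheck the two conditions on the smaller quiver.
\end{enumerate}
The last check is immediate, since deleting arrows only removes compositions.

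Two smaller points. First, your explanation of the second axiom is off. What a simple socle actually needs is that $p_{h,j}$ and $p_{\iota h,j'}$ with $d(s(h))-j=d(s(\iota h))-j'$ never pass through the same half-edge. If $\rho^j(h)=\rho^{j'}(\iota h)$, then $h$ and $\iota h$ share a vertex, which forces $j=j'$ and hence $h=\iota h$. This uses only that $\iota$ has no fixed points; the first axiom is what makes the two maximal paths end at the same edge. The second axiom says that $\langle\nu\rangle$ acts freely on edges, which matters for the reduced form, and your argument never actually needs it. Second, if you read relations left to right as you say, the arrow $\beta$ with $\beta\alpha_h\notin I$ is $\alpha_{h^-}$, not $\alpha_{h^+}$. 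When $d(s(h))=1$ there is none, so ``the only'' should read ``at most one''. This is only a convention slip.
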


\subsection{Reduced forms}\label{subsec:reduced-form}
\

For each AFBG $(\Gamma = (V, H, s, \iota, \rho), d)$, define its {\it reduced form} to be
\[
(\Gamma_{\mathrm{red}},d)=(\Gamma / \langle \nu \rangle,d) := ((V, H', s', \iota', \rho'),d),
\]
where $V$ coincides with the same vertex set of $\Gamma$ and $d$ is the same function defined on $V$, and 
\begin{enumerate}
    \item $H' = H / \langle \nu \rangle = \{ [h] \mid h \in H \}$ is the set of $\langle \nu \rangle$-orbits in $H$;
    \item $s'([h]) = s(h)$;
    \item $\iota'([h]) = [\iota(h)]$;
    \item $\rho'([h]) = [\rho(h)]$.
\end{enumerate}
In fact, $(\Gamma_{\mathrm{red}},d)$ is a Brauer graph (see for example \cite[Section 2]{LL3}). 

Let $A$ denote the AFBGA associated with $(\Gamma,d)$. We define $A_{\mathrm{red}}$ to be the BGA associated with $(\Gamma_{\mathrm{red}},d)$, which we also call the {\it reduced form} of $A$.

We note that the reduced form of the algebra $A$ in Example \ref{exa:preproj-A3} is the Brauer graph algebra associated with the ribbon graph consisting of a single edge connecting two distinct vertices, both of multiplicity two.

For the following statement, we recall that a Brauer graph is called a {\it Brauer tree} if its associated ribbon graph $\Gamma$ is a tree and the multiplicity function $m$ assigns the value $1$ to all vertices, except possibly for a single vertex $\vv$, called the {\it exceptional vertex}, for which $m(\vv)$ is referred to as the {\it exceptional multiplicity}. The BGA associated with a Brauer tree is called a {\it Brauer tree algebra}. In fact, a BGA is representation-finite if and only if it is a Brauer tree algebra (see for example \cite{SS}).

It is shown in~\cite[Theorem~2.29]{LL3} that an AFBGA is representation-finite (resp.\ domestic) if and only if its reduced form is representation-finite (resp.\ domestic). Moreover, in the representation-finite case, we have the following theorem.

\begin{theorem}\textnormal{(cf. \cite[Theorem 4.8]{LL3})}\label{thm:rep-fin-fms}
Let $A$ be an AFBGA with associated AFBG $(\Gamma,d)$. Then $A$ is representation-finite if and only if its reduced form $A_{\mathrm{red}}$ is a Brauer tree algebra with $n$ edges and exceptional multiplicity $m$. Equivalently, $\Gamma / \langle \nu \rangle$ is a Brauer tree. In this case, the stable Auslander--Reiten quiver of $A$ is isomorphic to $\mathbb{Z}A_{mn}/\langle \tau^{nr}\rangle$ for some positive integer $r$, where $\tau$ denotes the translation of $\mathbb{Z}A_{mn}$. That is, $A$ is derived equivalent to a self-injective Nakayama algebra.
\end{theorem}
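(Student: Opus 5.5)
This theorem is cited from \cite[Theorem 4.8]{LL3}, so the plan is to recover it from the covering relation between $A$ and $A_{\mathrm{red}}$ together with the classical theory of Brauer tree algebras.

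\textbf{Step 1: the criterion for representation-finiteness.} By \cite[Theorem 2.29]{LL3}, $A$ is representation-finite if and only if $A_{\mathrm{red}}$ is. A BGA is representation-finite exactly when it is a Brauer tree algebra \cite{SS}. Hence the first equivalence holds, and the phrasing ``$\Gamma/\langle\nu\rangle$ is a Brauer tree'' is just a restatement of the definition of the reduced form.

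To see concretely why \cite[Theorem 2.29]{LL3} holds, I would realize $A$ as a Galois covering of $A_{\mathrm{red}}$ with group $\mathbb{Z}/r$. This comes from the quotient $H\to H/\langle\nu\rangle$, which induces a map of bound quivers $Q_\Gamma\to Q_{\Gamma_{\mathrm{red}}}$ compatible with the relations (i),(ii). Because $\nu$ acts freely on $H$ in the admissible case, the push-down functor preserves indecomposability and representation type.

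\textbf{Step 2: the stable Auslander--Reiten quiver.} For a Brauer tree algebra with $n$ edges and exceptional multiplicity $m$, the stable AR quiver is $\mathbb{Z}A_{mn}/\langle\tau^{n}\rangle$. Passing to the covering $A$ of $A_{\mathrm{red}}$, one sees that ${}_s\Gamma_A$ is a finite cover of ${}_s\Gamma_{A_{\mathrm{red}}}$. By Riedtmann's structure theorem it has the form $\mathbb{Z}\Delta/G$ for a Dynkin tree $\Delta$, and $\Delta$ must equal $A_{mn}$ because the universal covers agree. The group $G$ is infinite cyclic, generated by an element lying over $\tau^n$. Counting vertices gives
\[
|{}_s\Gamma_A| = r\cdot |{}_s\Gamma_{A_{\mathrm{red}}}| = r\cdot n\cdot mn.
\]
It follows that $G=\langle \tau^{nr}\rangle$, possibly twisted by the reflection of $A_{mn}$. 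I expect the main obstacle to be excluding this twist, that is, showing that the automorphism is exactly $\tau^{nr}$ rather than $\tau^{nr}\phi$. I would handle it by noting that the Nakayama automorphism acts on the AR quiver through the rotation induced by $\rho^{d}$, which preserves the two uniserial ``sides'' of each projective, so no reflection of $A_{mn}$ occurs. Alternatively, one can use that for special biserial algebras $\Omega$ acts on the two boundary components of the tube-like stable quiver compatibly with the ribbon structure.

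\textbf{Step 3: the derived equivalence.} The self-injective Nakayama algebra with $nr$ simples and Loewy length $mn+1$ has the same stable AR quiver $\mathbb{Z}A_{mn}/\langle\tau^{nr}\rangle$. By Asashiba's derived equivalence classification of representation-finite self-injective algebras \cite{Asa1}, the derived class of a standard algebra is determined by its type $(\Delta, f, t)$. Here the type is $(A_{mn}, r/m\text{-data}, 1)$, and this matches the Nakayama algebra, so the two are derived equivalent. For this to apply, $A$ must be standard. That holds because $A$ is special biserial and $\mathrm{char}$-independent, so non-standard algebras of type $D_{3m}$ cannot occur.
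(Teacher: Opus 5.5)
The paper gives no proof of this statement; it quotes it from \cite[Theorem 4.8]{LL3}, so your outline has to stand on its own. The architecture is sound: \cite[Theorem 2.29]{LL3} and \cite{SS} for the criterion, then $A\to A_{\mathrm{red}}$ as a Galois covering with group $\langle\nu\rangle\cong\mathbb{Z}/r$, then identification of ${}_s\Gamma_A$, then Asashiba's type $(A_{mn},r/m,1)$, which is the type of the self-injective Nakayama algebra with $nr$ simples and Loewy length $mn+1$. However, the covering statement your Step 2 rests on is justified incorrectly. Free action of $\nu$ on $H$, i.e.\ on the objects of the cover, does not make push-down along a \emph{finite} Galois covering preserve indecomposability. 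A band of the cover can be rotated onto itself by some $g\neq 1$, and the resulting $g$-stable band modules can push down to decomposable modules. What Gabriel's covering theorem needs is that $\mathbb{Z}/r$ act freely on isoclasses of indecomposable $A$-modules. Both your covering ${}_s\Gamma_A\to{}_s\Gamma_{A_{\mathrm{red}}}$ and your count $|{}_s\Gamma_A|=r\,|{}_s\Gamma_{A_{\mathrm{red}}}|$ depend on this. In the representation-finite case it is easy to supply:
\begin{itemize}
\item there are no band modules;
\item projectives are permuted freely because $\nu$ acts freely on edges, which is where $\rho^{n\,d(s(h))}(h)\neq\iota(h)$ is used;
\item ${}^{g}M(w)\cong M(w)$ for a string $w$ would force $g(w)=w^{\pm1}$, so $g$ would fix the first vertex of $w$, or its middle vertex, or send its middle letter to its own inverse. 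All three are impossible for $g\neq 1$, since $g$ acts freely on vertices and maps arrows to arrows.
\end{itemize}

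Second, the ``twist'' you single out as the main obstacle is not an obstacle. Vertex counting is the wrong tool for it anyway, since it cannot distinguish $\langle\tau^{nr}\rangle$ from $\langle\tau^{nr}\phi\rangle$. Once the degree-$r$ covering of stable translation quivers ${}_s\Gamma_A\to\mathbb{Z}A_{mn}/\langle\tau^n\rangle$ is available, compose the universal cover of ${}_s\Gamma_A$ with it. The composite is a universal cover of $\mathbb{Z}A_{mn}/\langle\tau^n\rangle$, so the group of ${}_s\Gamma_A$ is an index-$r$ subgroup of $\langle\tau^n\rangle$, namely $\langle\tau^{nr}\rangle$. Since $\tau^{nr}\phi\notin\langle\tau^n\rangle$, no reflection can occur. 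Admissibility has already done its work in making $\Gamma/\langle\nu\rangle$ a ribbon graph and $A\to A_{\mathrm{red}}$ a Galois covering, so your heuristics about $\rho^{d}$ preserving uniserial sides, or $\Omega$ acting on boundary components, are not needed. Finally, standardness should be deduced from the tree class. Non-standard representation-finite self-injective algebras have tree class $D_{3m}$ (in characteristic $2$), whereas you have shown class $A_{mn}$; ``special biserial and characteristic-independent'' is not by itself a reason.
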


\subsection{Determination by admissible fractional Brauer graphs}
\

In this section, we prove that, similarly to the case of BGA \cite[Lemma 3.1]{AZ}, an AFBGA is uniquely determined by its associated AFBG.

	\begin{proposition}\label{prop:determined-by-BG}
Let $A$ be an indecomposable self-injective special biserial $k$-algebra admitting a presentation $(Q,I)$ of the form given in the definition of AFBGAs, and let $(\Gamma,d)$ be the associated AFBG. If $(\Gamma,d)$ is not one of the exceptional BGs described in \cite[Lemma 3.1]{AZ}, then the underlying graph $\Gamma$ is independent of the choice of the presentation $(Q,I)$.
\end{proposition}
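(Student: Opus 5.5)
The plan is to follow the strategy of \cite[Lemma 3.1]{AZ}. We show that the ribbon graph $\Gamma$, together with the degree function $d$, can be read off from intrinsic module-theoretic data of $A$. Such data does not depend on the chosen presentation $(Q,I)$.

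\textbf{Edges and quiver.} The edges of $\Gamma$ correspond to the isoclasses of simple $A$-modules. The quiver $Q$ is the Gabriel quiver of $A$, so it is determined by $A$ up to isomorphism. The remaining data are the vertices of $\Gamma$ and the cyclic orders, that is, the permutation $\rho$ on the arrows of $Q$. We recover these from the uniserial structure of projectives. For each indecomposable projective $P_{\bar h}=Ae_{\bar h}$, the module $\rad P/\mathrm{soc} P$ decomposes as a direct sum of at most two uniserial modules (special biserial, self-injective). The two summands correspond to the two half-edges $h,\iota(h)$, and their composition series read off the successive edges $\overline{h^+},\overline{h^{++}},\dots$ around $s(h)$. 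The lengths of these uniserials are $d(s(h))-1$. Thus the maximal uniserial submodules of $\rad P$ determine, intrinsically, sequences of simples together with lengths. From these we recover the ``$\rho$-successor'' of each arrow: two arrows $\alpha:x\to y$ and $\beta:y\to z$ satisfy $\beta=\rho(\alpha)$ exactly when $\beta\alpha\neq 0$ in $A$, for any presentation. Then $\rho$ is given by the nonvanishing of length-two products, and its orbits recover $V(\Gamma)$ with $d$.

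\textbf{Independence from the choice of arrows.} Here the problem is that $\beta\alpha\neq0$ depends on the choice of arrow representatives. When there are two arrows $x\to y$ (parallel arrows), a change of generators $\alpha\mapsto \alpha+\lambda\alpha'$ could alter which compositions vanish. I would handle this as in \cite{AZ}. In a special biserial presentation with the zero relations (ii), at each vertex the two ``directions'' are distinguished by the uniserial submodules of $\rad P$. These are intrinsic unless the two uniserials are isomorphic, or the local structure is so small that a change of basis mixes them. This degenerate situation occurs only when $\rad P/\mathrm{soc}P$ has summands of length $\le 1$ that are interchangeable. That means some degree $d(\vv)$ is at most $2$, with truncated-like behaviour.

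\textbf{Main obstacle.} The main work is to show that these ambiguities only arise for the exceptional graphs of \cite[Lemma 3.1]{AZ}, so that the analysis reduces to the BGA case. I would argue as follows. If a presentation change swaps cyclic orderings, it also commutes with the Nakayama automorphism $\nu_A$ of Proposition~\ref{prop:bfBGA-basic}, up to inner automorphism. Hence it induces a corresponding change on the reduced form $\Gamma_{\mathrm{red}}=\Gamma/\langle\nu\rangle$. By covering compatibility, ambiguity for $\Gamma$ then forces ambiguity for the local configurations of a Brauer graph. Checking that such local configurations (small valency, degree $\le 2$ at both endpoints of parallel edges) only globally occur in the listed exceptional cases is a finite case analysis. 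I expect this analysis, especially for loops and multiple edges where $\nu$ acts nontrivially, to be the hardest step. Outside those cases the map $\rho$, and therefore $\Gamma$, is independent of the presentation.
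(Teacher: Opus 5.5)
You correctly locate where presentation-dependence can enter: degree-$2$ vertices at which $\rad P/\mathrm{soc}\,P$ has two isomorphic summands. However, the step you defer as the main obstacle cannot be completed, because the claim it is meant to prove is false when $\mathrm{char}(k)\neq 2$. Passing to $\Gamma_{\mathrm{red}}$ can at best show that an ambiguity forces $\Gamma_{\mathrm{red}}$ to be one of the local exceptional graphs of \cite[Lemma 3.1]{AZ}. It does not force $\Gamma$ itself to be exceptional, and the ambiguity does lift to covers.

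Concretely, take the AFBG of Example~\ref{exa:preproj-A3}, whose reduced form is exactly the exceptional edge with two vertices of multiplicity $2$. Replace the arrows $\alpha_j,\beta_j\colon j\to j+1$ by $a_j=\alpha_j+\sqrt{-1}\,\beta_j$ and $b_j=\alpha_j-\sqrt{-1}\,\beta_j$; these span the arrow space since $\mathrm{char}(k)\neq 2$. Then $a_{j+1}a_j=0=b_{j+1}b_j$ and $a_{j+1}b_j=b_{j+1}a_j=2\alpha_{j+1}\alpha_j\neq 0$. So the same algebra has a second presentation of AFBGA form, with $\rho(a_j)=b_{j+1}$ and $\rho(b_j)=a_{j+1}$. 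This is a single $\rho$-orbit $a_1,b_2,a_3,b_1,a_2,b_3$, i.e.\ one vertex of valency $6$ carrying three antipodal loops, with $d=2$. Moreover $\nu=\rho^2$ commutes with $\iota$ and never sends $h$ to $\iota(h)$, so this is an admissible AFBG. The two graphs have $2$ versus $1$ vertices, and the same construction works for any odd number of edges joining two degree-$2$ vertices.

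Hence no finite case analysis will give the statement with only the exceptions of \cite{AZ}: one would have to add every AFBG whose reduced form is one of those exceptional graphs. The paper's proof meets exactly this configuration in its case (4) and dismisses it by citing Theorem~\ref{thm:derived-eqv-between-BGAs}. But that theorem's vertex count rests on \cite[Proposition 4.5]{AZ} applied to the reduced forms, which here are the local exceptional algebras. Indeed, the two AFBGs above are isomorphic, yet they violate all three conditions of part (2). So the paper does not supply this step either.

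There is also a smaller omission. Reading $\rho$ off nonzero products $\beta\alpha$ only detects vertices of degree at least $2$. Unlike a Brauer graph, an AFBG can have a truncated vertex of valency $\geq 2$. For instance, take $d(\vv)=1$ and $d(\ww)=2$, with an odd number of edges between $\vv$ and $\ww$ and the cyclic order at $\vv$ equal to the square of the one at $\ww$. The arrows $\alpha_h$ at such a vertex coincide with paths around the other endpoint and are not arrows of the Gabriel quiver. Your procedure therefore recovers neither the vertex nor its cyclic order. The paper treats this separately in its case (3): since $\nu$ agrees with $\rho$ on half-edges at a truncated vertex, the cyclic order is read off from the Nakayama permutation on the simples with uniserial projective covers. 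You need a comparable step.
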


	\begin{proof}
		While the structure of projective $A$-modules can be directly deduced from $(\Gamma,d)$, we aim to show that the AFBG $(\Gamma,d)$ itself can be reconstructed from the module category of $A$, independently of the specific presentation $kQ/I$. Without loss of generality, we may assume that $A$ is not radical square zero. Otherwise, $A$ is a self-injective Nakayama algebra, and all its indecomposable projective modules are uniserial. In this case, its AFBG $(\Gamma,d)$ is uniquely determined by the orbits of simple modules under the Nakayama automorphism.
		
		Suppose that $A$ admits two presentations $kQ/I \cong kQ'/I'$, both satisfying the definition of $f_{ms}$-BGAs. This induces a Morita equivalence between $kQ/I\text{-}\mathsf{mod}$ and $kQ'/I'\text{-}\mathsf{mod}$. Since the ideals $I$ and $I'$ can be chosen to be admissible, we may identify $Q$ and $Q'$. Consequently, we obtain a bijection between:
    \begin{itemize}
        \item Primitive idempotents of the two presentations,
        \item Simple modules \(S_i\) over \(kQ/I\) and \(kQ^{\prime}/I^{\prime}\),
        \item Edges of the ribbon graph \(\Gamma\) and \(\Gamma^{\prime}\).
    \end{itemize}

    We now reconstruct \(\Gamma\) and \(\Gamma^{\prime}\) from the module categories. It suffices to show that, for each vertex of $\Gamma$, the cyclic order around it is uniquely determined by the corresponding module category. For each projective cover \(P_i\) of \(S_i\), consider the module
    $\mathrm{rad}\,P_i / \mathrm{soc}\,P_i$.
    Since $A$ is special biserial, hence biserial, this module decomposes into at most two uniserial summands \(M_i\) and \(N_i\). Each summand yields a sequence of simple modules via its radical series, for instance, \((S_{i_1}, \dots, S_{i_m})\) for \(M_i\). We continue extending this sequence via the radical series of the projective cover of $S_{i_1}$, and append $S_i$ to obtain a {\it primitive} cyclic sequence, that is, a cyclic sequence which is not a nontrivial power of a shorter cyclic sequence.:
    \[
    (S_i = S_{i_0}, S_{i_1}, \dots, S_{i_n}),
    \]
    indexed by \(\mathbb{Z}/(n+1)\mathbb{Z}\), considered up to cyclic permutation. Let \(S_i\) be a simple module with associated cyclic sequence(s).

    \begin{enumerate}
        \item \textbf{\(S_i\) appears in two distinct cyclic sequences.} Then the corresponding edge is not a loop. The cyclic orderings at its endpoints are determined by subsequences of the form \((S_i, S_{i_1}, \dots, S_{i_l}, S_i)\) excluding \(S_i\) internally. Vertex degrees equal the length of the sequence \((S_{i_1}, \dots, S_{i_m})\) plus $1$ for each uniserial summand $M_i$.

        \item \textbf{\(S_i\) appears in one cyclic sequence with a subsequence \(\sigma = (S_i, S_{i_1}, \dots, S_{i_l}, S_i, S_{i_{l+2}}, \dots, S_{i_m})\) where \((S_{i_1}, \dots, S_{i_l})\) and \((S_{i_{l+2}}, \dots, S_{i_m})\) are distinct and do not contain \(S_i\).} 
               Then the edge is a loop. The cyclic orderings at its endpoints are determined by $\sigma$. Vertex degrees also equal the length of the sequence \((S_{i_1}, \dots, S_{i_m})\) plus $1$ for each uniserial summand $M_i$.

        \item \textbf{\(S_i\) appears in one cyclic sequence without such \(\sigma\), and \(P_i\) is uniserial.} 
            We reconstruct a vertex $\vv$ with degree $1$ and each half-edge in the same $\langle \nu\rangle$-orbit will connect with this vertex. the permutation induced by $\nu^{-1}$ will also give the cyclic ordering around $\vv$.

        \item \textbf{\(S_i\) appears in one cyclic sequence without \(\sigma\), but \(P_i\) is non-uniserial.} \\
              In this case, $\Gamma$ is determined as a caterpillar as shown in \cite[Section 3]{AZ} and vertex degrees also equal the length of the sequence \((S_{i_1}, \dots, S_{i_m})\) plus $1$ for each uniserial summand $M_i$. In this case, $\Gamma$ may be either a caterpillar with one vertex or with two distinct vertices, where each vertex has the same degree. By Theorem \ref{thm:derived-eqv-between-BGAs}, they are not even derived equivalent, and hence in particular not isomorphic. Therefore, the above construction uniquely determines a ribbon graph $\Gamma$.
    \end{enumerate}

	We note that the exceptional cases in \cite[Lemma 3.1]{AZ}, namely the BGAs $k\langle x,y\rangle/(xy-yx,x^2,y^2)$ and $k\langle x,y\rangle/(x^2-y^2,xy,yx)$, are isomorphic when $\mathrm{char}(k) \neq 2$.. Consequently, in this case, the associated ribbon graph $\Gamma$ can be either a loop or a single edge.
	\end{proof}

\section{Repetitive algebras and $r$-fold trivial extensions of gentle algebras}\label{sec:3}

We begin by recall some basic definitions in representation theory.

\begin{definition}
	Let $A$ be a finite-dimensional $k$-algebra, and let $D=\mathrm{Hom}_k(-,k)$.
\begin{enumerate}
	\item The {\it repetitive algebra} $\widehat{A}$ of $A$ is the locally finite-dimensional algebra
\[
\widehat{A}=\bigoplus_{i\in \mathbb{Z}} A_i \;\oplus\; \bigoplus_{i\in \mathbb{Z}} DA_i,
\]
where each $A_i \cong A$. An element of $\widehat{A}$ can be written as a sequence $(a_i,f_i)_{i\in\mathbb{Z}}$ with only finitely many nonzero terms, where $a_i\in A_i$ and $f_i\in DA_i$. The multiplication is given by
\[
(a_i,f_i)\cdot (a_i',f_i') = ( a_i a_i',a_i f_i' + f_i a_{i+1}'),
\]
 for each $i\in\mathbb{Z}$.
There is a natural automorphism $\nu$ of $\widehat{A}$, called the {\it Nakayama automorphism}, 
given by the shift $\nu(a_i)=a_{i+1}$ and $\nu(f_i)=f_{i+1}$.

\item For a positive integer $r$, the {\it $r$-fold trivial extension} of $A$ is defined as the orbit algebra $$T_r(A):=\widehat{A}/\langle \nu^r \rangle.$$
In particular, when $r=1$, one has $T(A):=T_1(A) \cong \widehat{A}/\langle \nu \rangle$ is called the {\it trivial extension} of $A$.
\end{enumerate}
\end{definition}

\begin{definition}
	Let $A \cong kQ/I$ be a special biserial algebra.
We say that $A$ is a {\it gentle algebra} if, in addition, the following two conditions hold:
\begin{enumerate}
    \item The ideal $I$ is generated by paths of length $2$.
    
    \item For each arrow $\alpha \in Q_1$, there is at most one arrow $\beta \in Q_1$ and at most one arrow $\gamma \in Q_1$ such that 
    $\alpha\beta \in I$ and $\gamma\alpha \in I$.
\end{enumerate}
\end{definition}

Since a gentle algebra $A=kQ/I$ is monomial, there exists a set $\mathcal{M}$ of maximal paths in $A$, that is, for each $p \in \mathcal{M}$ and each arrow $\alpha \in Q_1$, one has $\alpha p = 0 = p \alpha$ in $A$. The set $\mathcal{M}$ is uniquely determined by $A = kQ/I$, since, by \cite[Proposition~2.5]{Green}, the ideal $I$ admits a unique minimal generating set $\mathcal{G}$ consisting of paths. Then there exists a ribbon graph $\Gamma_A$ (see \cite{Sch,OPS}) whose vertices correspond to 
\[
\overline{\mathcal{M}} := \mathcal{M} \cup \left\{\, i \in Q_0 \ \middle|\ 
\begin{array}{l}
i \text{ is a source with a single arrow starting at } i, \\
\text{or } i \text{ is a sink with a single arrow ending at } i, \\
\text{or there is a single arrow } \alpha \text{ ending at } i \\
\text{and a single arrow } \beta \text{ starting at } i \text{ with } \beta\alpha \notin I
\end{array}
\right\},
\]
and whose edges correspond to the vertices of $Q$. An edge $e$ is incident to a vertex $\vv \in \overline{\mathcal{M}}$ in $\Gamma_A$ if the corresponding path $p_\vv$ passes through $e \in Q_0$. The cyclic order around each vertex is naturally induced by the corresponding path.

It is well known that the BGA associated with $(\Gamma_A,{val})$ is the trivial extension of $A$  \cite{Sch}. Moreover, a {\it cutting set} $D$ of a BGA is defined by choosing exactly one arrow from each special cycle (up to cyclic permutation) $C_\alpha$ (see Subsection~\ref{subsec:def-fms-BGA}). Furthermore, each gentle algebra corresponds bijectively to a BGA with BG $(\Gamma, {val})$, together with a cutting set $D$ (see for example in \cite[Corollary 5.5]{GS}).

We omit some basic material on covering theory and refer the reader to \cite{BG1982,MP,Erd}. Now we begin by constructing the $r$-covering of a BGA with multiplicity one. Let $(\Gamma = (V, H, s, \iota, \rho), d=val)$ be a BG with multiplicity one, meaning that for every vertex $\vv \in V(\Gamma)$, the value $d(\vv)={val}(\vv)$. Denote by $A = kQ/I$ the Brauer graph algebra associated with $(\Gamma,val)$. 
For any cyclic $\langle \rho \rangle$-orbit in $H$, choose an angle from $h_n$ to $h_1$ (so that $\rho(h_n) = h_1$), and fix an ordering $\{h_1, \dots, h_n\}$ satisfying
\begin{itemize}
	\item $\rho(h_i) = h_{i+1} \quad (1 \le i \le n-1)$;
	\item $h_i\neq h_j$, for all $i\neq j$.
\end{itemize}
Such a selection of angles determines a cutting set $D$ of the quiver $Q$. We call a $\langle \rho \rangle$-orbit $\{h_1, \dots, h_n\}$ a $\langle \rho \rangle$-orbit {\it with respect to $D$}.
We define two new ribbon graphs associated with $\Gamma$ and $D$.

\medskip

\noindent
(1) The infinite covering graph $\Gamma_D^{\mathbb{Z}}$ is defined by
\[
\Gamma_D^{\mathbb{Z}} = \bigl(V, \bigsqcup_{j \in \mathbb{Z}} H^{(j)}, s, \iota, \rho \bigr),
\]
where $H^{(j)} = H$ for each $j \in \mathbb{Z}$, and elements of $H^{(j)}$ are denoted by $h^{(j)}$ for $h \in H$.

The structure maps are given by
\[
s(h^{(j)}) = s(h), \qquad \iota(h^{(j)}) = \iota(h)^{(j)}.
\]
For each $\langle \rho \rangle$-orbit $\{h_1, \dots, h_n\}$ with respect to $D$ in $H$, define the permutation $\rho$ on $\bigsqcup_{j \in \mathbb{Z}} H^{(j)}$ by
\[
\rho(h_i^{(j)}) = h_{i+1}^{(j)} \quad (1 \le i \le n-1), 
\qquad 
\rho(h_n^{(j)}) = h_1^{(j+1)},
\]
for all $j \in \mathbb{Z}$.

\medskip

\noindent
(2) For a positive integer $r$, the finite graph $\Gamma_D^{(r)}$ is defined as
\[
\Gamma_D^{(r)} = \bigl(V, \bigsqcup_{j \in \mathbb{Z}/r\mathbb{Z}} H^{(j)}, s, \iota, \rho \bigr),
\]
where $H^{(j)} = H$ for each $j \in \mathbb{Z}/r\mathbb{Z}$.

The maps $s$, $\iota$, and $\rho$ are defined in the same way as above, namely,
\[
s(h^{(j)}) = s(h), \qquad \iota(h^{(j)}) = \iota(h)^{(j)},
\]
and for each $\langle \rho \rangle$-orbit $\{h_1, \dots, h_n\}$ with respect to $D$ in $H$,
\[
\rho(h_i^{(j)}) = h_{i+1}^{(j)} \quad (1 \le i \le n-1), 
\qquad 
\rho(h_n^{(j)}) = h_1^{(j+1)},
\]
for all $j \in \mathbb{Z}/r\mathbb{Z}$.

Then we have the following proposition.

\begin{proposition}\label{prop:r-covering}
	Let $(\Gamma, d)$ be a BG, $D$ be a cutting set, and $r$ be an arbitrary positive integer. Then:  
\begin{enumerate}
    \item $(\Gamma_D^{(r)}, d)$ (resp. $(\Gamma_D^{\mathbb{Z}}, d)$) is an AFBG.
    \item Let $A$ (resp.\ $B$) be the BGA (resp.\ AFBGA) associated with $(\Gamma, d)$ (resp.\ $(\Gamma_D^{\mathbb{Z}}, d)$). Then $B$ is an $\mathbb{Z}$-covering of $A$.
    \item Let $A$ (resp.\ $B$) be the BGA (resp.\ AFBGA) associated with $(\Gamma, d)$ (resp.\ $(\Gamma_D^{(r)}, d)$). Then $B$ is an $r$-covering of $A$.
\end{enumerate}
\end{proposition}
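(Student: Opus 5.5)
The plan is to check the two axioms of Definition~\ref{fms-BG} directly, and then to realize the Galois covering at the level of bound quivers. Throughout I work in the setting fixed just before the construction, namely $d=val$ (multiplicity one). For a general BG the first axiom below would instead require $m(s(h))=m(s(\iota h))$ for every $h$, taken modulo $r$ in the finite case, so this assumption is genuinely used.

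\textbf{Step 1: the AFBG axioms.} Fix a $\langle\rho\rangle$-orbit $\{h_1,\dots,h_n\}$ with respect to $D$, so that $n=val(s(h_i))=d(s(h_i))$. The definition of $\rho$ on $\Gamma_D^{\mathbb Z}$ gives $\rho^{n}(h_i^{(j)})=h_i^{(j+1)}$. Hence the Nakayama map of $(\Gamma_D^{\mathbb Z},d)$ is the level shift
\[
\nu(h^{(j)})=h^{(j+1)} \quad\text{for every } h\in H .
\]
The first axiom then reads $\iota(h^{(j+1)})=\iota(h)^{(j+1)}$, which holds because $\iota$ preserves levels. For the second axiom, $\rho^{nd}(h^{(j)})=h^{(j+n)}$ lies in level $j+n$, while $\iota(h^{(j)})=\iota(h)^{(j)}$ lies in level $j$. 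These could only coincide if $n=0$ and $\iota(h)=h$, which is impossible since $\iota$ has no fixed points. The same computation works for $\Gamma_D^{(r)}$ with levels taken in $\mathbb Z/r\mathbb Z$. Since $H^{(j)}=H$ and $\iota$ is fixed-point free on $H$, we again get $\iota(h)^{(j)}\neq h^{(j)}$. This proves (1). Each $\rho$-orbit in $\Gamma_D^{(r)}$ is a union of $r$ copies of an orbit of $\Gamma$ glued along the cut angle, so the $\rho$-cycles still correspond to vertices of $V$.

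\textbf{Step 2: the covering of quivers.} Let $B$ be the algebra of $(\Gamma_D^{\mathbb Z},d)$, with quiver $\widetilde Q$ and ideal $\widetilde I$. Define $F:\widetilde Q\to Q$ on vertices by $\overline{h^{(j)}}\mapsto\bar h$ and on arrows by $\alpha_{h^{(j)}}\mapsto\alpha_h$. The arrow $\alpha_{h^{(j)}}$ ends at $\overline{\rho(h^{(j)})}$, which is $\overline{\rho(h)^{(j)}}$ or $\overline{h_1^{(j+1)}}$, and both map to $\overline{\rho(h)}$. So $F$ is a morphism of quivers. It is bijective on the arrows starting (resp.\ ending) at each vertex, because the half-edges at $\overline{h^{(j)}}$ are exactly the lifts of those at $\bar h$. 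The group $\mathbb Z$ acts freely on $\widetilde Q$ by the shift $\nu$, and $F$ induces $\widetilde Q/\mathbb Z\cong Q$. Thus $F$ is a Galois covering of quivers with group $\mathbb Z$.

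\textbf{Step 3: the relations.} The key point is that $\widetilde I$ is generated by the lifts of the generators of $I$, and conversely that $F(\widetilde I)$ generates $I$. For the zero relations this is immediate: $\alpha_{h'^{(j')}}\alpha_{h^{(j)}}$ with $\rho(h^{(j)})\neq h'^{(j')}$ maps to a composable pair with $\rho(h)\neq h'$. Every such pair in $Q$ lifts uniquely once the starting vertex is fixed.

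For the commutativity relation at $\overline{h^{(j)}}$, the two paths $\alpha_{h^{(j)}}\cdots$ and $\alpha_{\iota(h)^{(j)}}\cdots$ lift the two sides of the corresponding relation in $A$. The first ends at $\overline{\nu h^{(j)}}=\overline{h^{(j+1)}}$ and the second at $\overline{\iota(h)^{(j+1)}}$, which are the same edge. So the lifted relation is a genuine relation in $\widetilde Q$, and it is exactly the defining relation of type (i) for $B$.

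Hence $\widetilde I$ is the $\mathbb Z$-stable ideal generated by the lifts of $I$. By the standard criterion for Galois coverings of bound quivers (Bongartz--Gabriel, Martinez-Villa--de la Pe\~na), the functor $F$ makes $B\to A$ a $\mathbb Z$-Galois covering with $B/\mathbb Z\cong A$. This proves (2).

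\textbf{Step 4: the finite case.} The shift by $r$ acts freely on $\Gamma_D^{\mathbb Z}$, and the quotient is exactly $\Gamma_D^{(r)}$. Therefore the algebra of $(\Gamma_D^{(r)},d)$ is $B/\langle\nu^r\rangle$, and the induced functor to $A$ is a Galois covering with group $\mathbb Z/r\mathbb Z$. This gives (3).

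I expect the main obstacle to be Step 3, namely verifying that the lifted relations generate precisely $\widetilde I$ rather than a smaller ideal. The point is that each relation of $A$ is \emph{homogeneous} with respect to the grading by the cut arrows $D$. Each special cycle passes through the cut arrow exactly once, so both sides of a commutativity relation have degree $1$, and zero relations impose no constraint. Once this homogeneity is established, the lifting argument goes through uniformly.
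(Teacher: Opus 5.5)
Your proposal is correct. For part (1) it is the same computation as the paper's: with $d=val$ the map $\nu=\rho^{d}$ is the level shift $h^{(j)}\mapsto h^{(j+1)}$, which commutes with $\iota$. Moreover, $\nu^{n}(h^{(j)})$ can equal $\iota(h)^{(j)}$ only if the levels agree and $\iota(h)=h$, which is impossible.

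For parts (2) and (3) the routes diverge.
\begin{itemize}
\item The paper observes that $f\colon h^{(j)}\mapsto h$ is a covering of fractional Brauer graphs in the sense of \cite[Definition~3.1]{LL2}. It then invokes \cite[Theorem~5.8]{LL2} to obtain a covering of the associated algebras.
\item You build the Galois covering of bound quivers by hand, as the covering attached to the $\mathbb{Z}$-grading in which the arrows of the cutting set $D$ have degree $1$. You check that the zero relations lift uniquely and that the commutativity relations lift; the latter works because each side is a full special cycle and so crosses the cut arrow exactly once. You then apply the Bongartz--Gabriel / Martinez-Villa--de la Pe\~na criterion for a $G$-stable ideal whose push-down is $I$.
\end{itemize}

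The paper's route is shorter but hides where the hypotheses enter. Yours is self-contained and makes the link with the construction of $\widehat A$ in Theorem~\ref{thm:r-fold-tri} transparent. It also makes explicit that the statement needs $d=val$, or at least equal multiplicities at the two ends of every edge (modulo $r$ in the finite case). The statement only says ``BG'', and the paper's proof uses this assumption silently when it asserts that $\nu$ shifts levels by one.

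Two minor points. First, Step~4 is brisk. To identify the algebra of $(\Gamma_D^{(r)},d)$ with $B/\langle\nu^r\rangle$, rerun Steps~2--3 for the quotient $\Gamma_D^{\mathbb Z}\to\Gamma_D^{(r)}$, or work directly with the induced $\mathbb{Z}/r\mathbb{Z}$-grading; the argument goes through verbatim. Second, in Step~1 you use $n$ both for the valency and for the integer in the second axiom, which you should untangle.
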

\begin{proof}
By the construction of $\Gamma_D^{(r)}$ (resp.\ $\Gamma_D^{\mathbb{Z}}$), it naturally forms a ribbon graph. It is indeed an AFBG, since the Nakayama automorphism $\nu = \rho^{d}$ sends each $h^{(j)}$ to $h^{(j+1)}$, and correspondingly sends $\iota(h^{(j)}) = \iota(h)^{(j)}$ to $\iota(h^{(j+1)}) = \iota(h)^{(j+1)}$ for every $j \in \mathbb{Z}/r\mathbb{Z}$ (resp.\ $j \in \mathbb{Z}$). The action of the Nakayama permutation is admissible because each $\langle \sigma \rangle$-orbit is of the form $\{h^{(1)}, \ldots, h^{(r)}, \ldots\}$, and none of the $\iota(h^{(j)})$ lie in the same orbit as $h^{(j)}$.

Consider the morphisms of ribbon graphs
$$
f : \Gamma_D^{\mathbb{Z}} \longrightarrow \Gamma, \qquad h^{(j)} \longmapsto h,
$$
and
$$
f : \Gamma_D^{(r)} \longrightarrow \Gamma, \qquad h^{(j)} \longmapsto h.
$$
By the definitions of $\Gamma_D^{\mathbb{Z}}$, $\Gamma_D^{(r)}$, and \cite[Definition~3.1]{LL2}, these maps are natural coverings of the corresponding 
AFBGs. Consequently, by \cite[Theorem~5.8]{LL2}, the associated quiver algebras also form coverings. These coverings are $\mathbb{Z}$-coverings and $r$-coverings, respectively, since in the first case there is a $\mathbb{Z}$-action on the index set $\mathbb{Z}$, and in the second case each $\langle \nu \rangle$-orbit contains exactly $r$ elements.
\end{proof}

Now we show the main theorem of this section.

\begin{theorem}\label{thm:r-fold-tri}
	Let $A=kQ/I$ be a gentle algebra and $((\Gamma,d),D)$ be the BG associated with a cutting set corresponding to the trivial extension $T(A)$ of $A$. Then the AFBGA corresponding to $(\Gamma^\mathbb{Z}_D,d)$ is isomorphic to the repetitive algebra $\hat{A}$, and the AFBGA corresponding to $(\Gamma^{(r)}_D,d)$ is isomorphic to the $r$-fold trivial extension $T_r(A)$.
\end{theorem}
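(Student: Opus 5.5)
The plan is to prove the $\mathbb{Z}$-statement first and then obtain the $r$-fold statement by passing to orbit algebras. The guiding observation is that both $\widehat{A}$ and the AFBGA $B^{\mathbb{Z}}$ of $(\Gamma^{\mathbb{Z}}_D,d)$ are Galois $\mathbb{Z}$-coverings of the same BGA $T(A)$ with the same grading. So I would identify them as bound quivers and relations directly, keeping track of degrees throughout.

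\textbf{Step 1 (presentation of $\widehat{A}$).} I will use the well-known description of the repetitive algebra of a gentle algebra, obtained by iterating Schroer's presentation of $T(A)$. The quiver $\widehat{Q}$ has vertices $(i,j)$ for $i\in Q_0$, $j\in\mathbb{Z}$. It has two kinds of arrows:
\begin{itemize}
\item arrows $\alpha^{(j)}:(s(\alpha),j)\to(t(\alpha),j)$ for $\alpha\in Q_1$;
\item for each maximal path $p\in\mathcal{M}$ from $x$ to $y$, a new arrow $\beta_p^{(j)}:(y,j)\to(x,j+1)$, coming from the dual basis element $p^*\in DA_j$.
\end{itemize}
These new arrows are exactly the arrows of the cutting set $D$, lifted with a degree shift. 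The relations are of two kinds. The first are the zero relations of length $2$, namely the gentle relations of $A$ together with those forced by $DA$. The second are commutativity relations equating the two paths from $(i,j)$ to $(i,j+1)$, each obtained by going once around a special cycle through the $D$-arrow. I will justify this presentation by the standard argument: $\widehat{A}\to T(A)$ is the Galois covering associated with the $\mathbb{Z}$-grading in which $D$-arrows have degree $1$ and all other arrows degree $0$, and $T(A)$ is the BGA of $(\Gamma,\mathrm{val})$ by \cite{Sch}. Hence $\widehat{A}$ is the smash product of $T(A)$ with this grading.

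\textbf{Step 2 (compare quivers).} By construction, the half-edges of $\Gamma^{\mathbb{Z}}_D$ are the $h^{(j)}$. Since $\iota(h^{(j)})=\iota(h)^{(j)}$, edges correspond to pairs $(\bar h,j)$, that is, to vertices $(i,j)$ of $\widehat Q$. The arrow $\alpha_{h^{(j)}}$ goes to $\overline{\rho(h^{(j)})}$, which stays in level $j$ unless $h=h_n$ is the chosen angle, in which case it jumps to level $j+1$. Thus the non-cut arrows match the $\alpha^{(j)}$, and the cut arrows match the $\beta_p^{(j)}$. I will check that the orbit $\{h_1,\dots,h_n\}$ with respect to $D$ corresponds to the maximal path $p_{\vv}$, so that the jump really runs from the end of $p$ back to its start. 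Vertices in $\overline{\mathcal M}$ that are not maximal paths have valency $1$, and need a separate, trivial check.

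\textbf{Step 3 (compare relations).} Since $d=\mathrm{val}$ and each $\rho$-orbit in $\Gamma^{\mathbb{Z}}_D$ is infinite, the relation of type (i) at $h^{(j)}$ equates two paths of length $\mathrm{val}$. Each such path traverses a full special cycle and crosses the cut exactly once. These are precisely the commutativity relations of $\widehat{A}$ from level $j$ to level $j+1$. The type (ii) relations coincide with the length-$2$ zero relations. This yields an isomorphism $B^{\mathbb{Z}}\cong\widehat{A}$. It is compatible with the shift, because the Nakayama automorphism $h^{(j)}\mapsto h^{(j+1)}$ of $(\Gamma^{\mathbb{Z}}_D,d)$, computed in the proof of Proposition~\ref{prop:r-covering}, corresponds to $\nu_{\widehat A}$.

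\textbf{Step 4 (orbit algebras).} By Proposition~\ref{prop:r-covering}, $(\Gamma^{(r)}_D,d)$ is the quotient of $\Gamma^{\mathbb{Z}}_D$ by $\nu^r$, i.e.\ by taking levels modulo $r$. Since the isomorphism of Step 3 is $\nu$-equivariant, passing to orbit algebras gives $B^{(r)}\cong\widehat{A}/\langle\nu^r\rangle=T_r(A)$.

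The main obstacle is Step 1, in particular checking the signs and scalars of the commutativity relations in $\widehat{A}$. The two paths must be equal rather than merely proportional. I would handle this by rescaling the dual basis elements $p^*$, using the fact that $k$ is a field and that each relation involves a unique pair of paths.
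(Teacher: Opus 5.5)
Your proposal is correct and follows the same route as the paper. You identify the bound-quiver presentation of the AFBGA of $(\Gamma^{\mathbb{Z}}_D,d)$ with the presentation of $\widehat{A}$ (Ringel's), then use $\nu$-equivariance and $\Gamma^{(r)}_D\cong\Gamma^{\mathbb{Z}}_D/\langle\nu^r\rangle$ to pass to orbit algebras. The only difference is that you derive Ringel's presentation from the graded covering of Schr\"oer's presentation of $T(A)$ rather than citing it.

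The scalar issue you flag does not actually arise. Since $A$ is monomial, you can use the dual basis of paths, and then both composites around the cycles through $x$ equal $e_x^*$ on the nose. This matters, because your rescaling fallback would not work as stated: rescaling one $p^*$ changes the relations at every vertex of $p$ at once.
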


\begin{proof}
	This is natural, since the quiver and defining relations of the AFBGA associated with $(\Gamma_D^{\mathbb{Z}}, d)$ coincide with those of $\widehat{A}$ constructed in \cite{Rin2}. 
Moreover, the AFBGA associated with $(\Gamma_D^{(r)}, d)$ is isomorphic to the $r$-fold trivial extension $T_r(A)$. Indeed, $\Gamma_D^{(r)} \cong \Gamma_D^{\mathbb{Z}} / \langle \nu^r \rangle$, and hence the corresponding AFBGA is isomorphic to $\widehat{A} / \langle \nu^r \rangle$.
\end{proof}

We end this section with some examples.

	\begin{example}\label{exa:2-fold-extension}
		Consider the BGA $\Lambda$ with associated BG $(\Gamma,d=2)$ where $\Gamma$ is given by

		\begin{center}

\tikzset{every picture/.style={line width=0.75pt}} 

\begin{tikzpicture}[x=0.75pt,y=0.75pt,yscale=-1,xscale=1]

\draw  [fill={rgb, 255:red, 0; green, 0; blue, 0 }  ,fill opacity=1 ] (252,98) .. controls (252,95.24) and (254.24,93) .. (257,93) .. controls (259.76,93) and (262,95.24) .. (262,98) .. controls (262,100.76) and (259.76,103) .. (257,103) .. controls (254.24,103) and (252,100.76) .. (252,98) -- cycle ;
\draw  [line width=1.5]  (157,98) .. controls (157,70.39) and (179.39,48) .. (207,48) .. controls (234.61,48) and (257,70.39) .. (257,98) .. controls (257,125.61) and (234.61,148) .. (207,148) .. controls (179.39,148) and (157,125.61) .. (157,98) -- cycle ;
\draw  [fill={rgb, 255:red, 0; green, 0; blue, 0 }  ,fill opacity=1 ] (152,98) .. controls (152,95.24) and (154.24,93) .. (157,93) .. controls (159.76,93) and (162,95.24) .. (162,98) .. controls (162,100.76) and (159.76,103) .. (157,103) .. controls (154.24,103) and (152,100.76) .. (152,98) -- cycle ;
\draw  [draw opacity=0] (156.76,113) .. controls (148.59,112.87) and (142,106.2) .. (142,98) .. controls (142,89.96) and (148.33,83.4) .. (156.28,83.02) -- (157,98) -- cycle ; \draw   (156.76,113) .. controls (148.59,112.87) and (142,106.2) .. (142,98) .. controls (142,89.96) and (148.33,83.4) .. (156.28,83.02) ;  
\draw  [draw opacity=0] (162.83,84.18) .. controls (168.22,86.45) and (172,91.78) .. (172,98) .. controls (172,104.02) and (168.45,109.22) .. (163.33,111.6) -- (157,98) -- cycle ; \draw   (162.83,84.18) .. controls (168.22,86.45) and (172,91.78) .. (172,98) .. controls (172,104.02) and (168.45,109.22) .. (163.33,111.6) ;  
\draw   (150.96,81.32) -- (156.23,83.19) -- (151.57,86.29) ;
\draw   (168.21,111.24) -- (162.65,111.86) -- (165.49,107.04) ;
\draw  [draw opacity=0] (253.24,112.53) .. controls (246.78,110.86) and (242,104.99) .. (242,98) .. controls (242,91.83) and (245.73,86.52) .. (251.06,84.22) -- (257,98) -- cycle ; \draw   (253.24,112.53) .. controls (246.78,110.86) and (242,104.99) .. (242,98) .. controls (242,91.83) and (245.73,86.52) .. (251.06,84.22) ;  
\draw   (245.36,84.49) -- (250.95,84.31) -- (247.74,88.89) ;
\draw  [draw opacity=0] (258.54,83.08) .. controls (266.1,83.85) and (272,90.23) .. (272,98) .. controls (272,105.55) and (266.42,111.8) .. (259.16,112.85) -- (257,98) -- cycle ; \draw   (258.54,83.08) .. controls (266.1,83.85) and (272,90.23) .. (272,98) .. controls (272,105.55) and (266.42,111.8) .. (259.16,112.85) ;  
\draw   (263.22,114.41) -- (257.81,113.03) -- (262.16,109.53) ;

\draw (121,90) node [anchor=north west][inner sep=0.75pt]   [align=left] {$x_1$};
\draw (172.75,90) node [anchor=north west][inner sep=0.75pt]   [align=left] {$x_2$};
\draw (224,90) node [anchor=north west][inner sep=0.75pt]   [align=left] {$y_1$};
\draw (272.75,90) node [anchor=north west][inner sep=0.75pt]   [align=left] {$y_2$};
\draw (150,50) node [anchor=north west][inner sep=0.75pt]   [align=left] {$h$};
\draw (150,135) node [anchor=north west][inner sep=0.75pt]   [align=left] {$h'$};
\draw (247,50) node [anchor=north west][inner sep=0.75pt]   [align=left] {$\iota(h)$};
\draw (247,135) node [anchor=north west][inner sep=0.75pt]   [align=left] {$\iota(h')$};

\end{tikzpicture}

		\end{center}
		Then $\Lambda=kQ_\Gamma/I_\Lambda$, where $$I_\Lambda=\langle x_1x_2-y_1y_2,x_2x_1-y_2y_1,x_1y_2,y_2x_1,x_2y_1,y_1x_2\rangle$$ 
		with the quiver $Q_\Gamma$
	\[\begin{tikzcd}
1 \arrow[r, "x_1", shift left=4] \arrow[r, "y_1" description, shift right=2] & 2 \arrow[l, "x_2" description, shift right] \arrow[l, "y_2", shift left=5]
\end{tikzcd}\]	
Note that the vertices $1$ and $2$ in $Q_\Gamma$ corresponding to the edges $\bar{h}$ and $\bar{h'}$ in $\Gamma$.

		Give different cutting sets $D_1=\{x_2,y_2\}$ and $D_2=\{x_2,y_1\}$. 
		The gentle algebra associated with $D_1$ is $A=kQ_1$, the Kronecker algebra, and the gentle algebra associated with $D_2$ is $A'=kQ_2/\langle x_1y_2,y_2x_1\rangle$. 
		\[\begin{tikzcd}
			{Q_1:} & 1 & 2 && {Q_2:} & 1 & 2
			\arrow["{x_1}", shift left, from=1-2, to=1-3]
			\arrow["{y_1}"', shift right, from=1-2, to=1-3]
			\arrow["{x_1}", shift left, from=1-6, to=1-7]
			\arrow["{y_2}", shift left, from=1-7, to=1-6]
		\end{tikzcd}\]

	Now consider $\Gamma_{D_1}^{\mathbb{Z}}$ (on the left) and $\Gamma_{D_1}^{(2)}$ (on the right), as shown below.

		\begin{center}

\tikzset{every picture/.style={line width=0.75pt}} 

\begin{tikzpicture}[x=0.75pt,y=0.75pt,yscale=-1,xscale=1]

\draw  [fill={rgb, 255:red, 0; green, 0; blue, 0 }  ,fill opacity=1 ] (252,98) .. controls (252,95.24) and (254.24,93) .. (257,93) .. controls (259.76,93) and (262,95.24) .. (262,98) .. controls (262,100.76) and (259.76,103) .. (257,103) .. controls (254.24,103) and (252,100.76) .. (252,98) -- cycle ;
\draw  [fill={rgb, 255:red, 0; green, 0; blue, 0 }  ,fill opacity=1 ] (152,98) .. controls (152,95.24) and (154.24,93) .. (157,93) .. controls (159.76,93) and (162,95.24) .. (162,98) .. controls (162,100.76) and (159.76,103) .. (157,103) .. controls (154.24,103) and (152,100.76) .. (152,98) -- cycle ;
\draw [line width=1.5]    (157,98) .. controls (98.75,64.5) and (144.25,54.5) .. (257,98) ;
\draw [line width=1.5]    (157,98) .. controls (105.75,59) and (201.25,34.5) .. (257,98) ;
\draw [line width=1.5]    (157,98) .. controls (169.75,45.5) and (278.75,45) .. (257,98) ;
\draw [line width=1.5]    (157,98) .. controls (206.25,61) and (315.75,69.5) .. (257,98) ;
\draw  [draw opacity=0][dash pattern={on 0.84pt off 2.51pt}] (171.36,93.64) .. controls (171.77,95.02) and (172,96.48) .. (172,98) .. controls (172,106.28) and (165.28,113) .. (157,113) .. controls (148.72,113) and (142,106.28) .. (142,98) .. controls (142,96.48) and (142.23,95.01) .. (142.65,93.63) -- (157,98) -- cycle ; \draw  [dash pattern={on 0.84pt off 2.51pt}] (171.36,93.64) .. controls (171.77,95.02) and (172,96.48) .. (172,98) .. controls (172,106.28) and (165.28,113) .. (157,113) .. controls (148.72,113) and (142,106.28) .. (142,98) .. controls (142,96.48) and (142.23,95.01) .. (142.65,93.63) ;  
\draw  [draw opacity=0][dash pattern={on 0.84pt off 2.51pt}] (271.36,93.64) .. controls (271.77,95.02) and (272,96.48) .. (272,98) .. controls (272,106.28) and (265.28,113) .. (257,113) .. controls (248.72,113) and (242,106.28) .. (242,98) .. controls (242,96.1) and (242.35,94.27) .. (243,92.6) -- (257,98) -- cycle ; \draw  [dash pattern={on 0.84pt off 2.51pt}] (271.36,93.64) .. controls (271.77,95.02) and (272,96.48) .. (272,98) .. controls (272,106.28) and (265.28,113) .. (257,113) .. controls (248.72,113) and (242,106.28) .. (242,98) .. controls (242,96.1) and (242.35,94.27) .. (243,92.6) ;  
\draw  [fill={rgb, 255:red, 0; green, 0; blue, 0 }  ,fill opacity=1 ] (446.5,97.96) .. controls (446.5,95.2) and (448.74,92.96) .. (451.5,92.96) .. controls (454.26,92.96) and (456.5,95.2) .. (456.5,97.96) .. controls (456.5,100.72) and (454.26,102.96) .. (451.5,102.96) .. controls (448.74,102.96) and (446.5,100.72) .. (446.5,97.96) -- cycle ;
\draw  [fill={rgb, 255:red, 0; green, 0; blue, 0 }  ,fill opacity=1 ] (346.5,97.96) .. controls (346.5,95.2) and (348.74,92.96) .. (351.5,92.96) .. controls (354.26,92.96) and (356.5,95.2) .. (356.5,97.96) .. controls (356.5,100.72) and (354.26,102.96) .. (351.5,102.96) .. controls (348.74,102.96) and (346.5,100.72) .. (346.5,97.96) -- cycle ;
\draw [line width=1.5]    (351.5,97.96) .. controls (293.25,64.46) and (338.75,54.46) .. (451.5,97.96) ;
\draw [line width=1.5]    (351.5,97.96) .. controls (400.75,60.96) and (510.25,69.46) .. (451.5,97.96) ;
\draw [line width=1.5]    (351.5,97.96) -- (451.5,97.96) ;
\draw  [draw opacity=0][line width=1.5]  (351.5,98) .. controls (351.5,97.99) and (351.5,97.97) .. (351.5,97.96) .. controls (351.5,70.35) and (373.89,47.96) .. (401.5,47.96) .. controls (428.82,47.96) and (451.01,69.86) .. (451.49,97.06) -- (401.5,97.96) -- cycle ; \draw  [line width=1.5]  (351.5,98) .. controls (351.5,97.99) and (351.5,97.97) .. (351.5,97.96) .. controls (351.5,70.35) and (373.89,47.96) .. (401.5,47.96) .. controls (428.82,47.96) and (451.01,69.86) .. (451.49,97.06) ;

\end{tikzpicture}

		\end{center}
		Then, by Theorem \ref{thm:r-fold-tri}, the repetitive algebra $\hat{A} = kQ_{\Gamma_{D_1}^{\mathbb{Z}}}/I_\Lambda$ is the AFBGA associated with $(\Gamma_{D_1}^{\mathbb{Z}}, d=2)$, where the quiver $Q_{\Gamma_{D_1}^{\mathbb{Z}}}$ is given as follows.
$$
\begin{tikzcd}
\cdots                                                                 &  & 1 \arrow[d, "x_1"', shift right] \arrow[d, "y_1", shift left]     &  & 1 \arrow[d, "y_1", shift left] \arrow[d, "x_1"', shift right]     &  & \cdots \\
\cdots \arrow[rru, "x_2"', shift right] \arrow[rru, "y_2", shift left] &  & 2 \arrow[rru, "y_2", shift left] \arrow[rru, "x_2"', shift right] &  & 2 \arrow[rru, "x_2"', shift right] \arrow[rru, "y_2", shift left] &  & \cdots
\end{tikzcd}$$
And the $2$-fold trivial extension $T_2(A) = kQ_{\Gamma_{D_1}^{(2)}}/I_\Lambda$ is the AFBGA associated with $(\Gamma_{D_1}^{(2)}, d=2)$, where the quiver $Q_{\Gamma_{D_1}^{(2)}}$ is given as follows.
$$
\begin{tikzcd}
1 \arrow[d, "x_1"', shift right] \arrow[d, "y_1", shift left] & 2 \arrow[l, "y_2", shift left] \arrow[l, "x_2"', shift right] \\
2 \arrow[r, "y_2", shift left] \arrow[r, "x_2"', shift right] & 1 \arrow[u, "x_1"', shift right] \arrow[u, "y_1", shift left]
\end{tikzcd}$$

Meanwhile, consider $\Gamma_{D_2}^{\mathbb{Z}}$ (on the left) and $\Gamma_{D_2}^{(2)}$ (on the right), as shown below.
		\begin{center}

\tikzset{every picture/.style={line width=0.75pt}} 

\begin{tikzpicture}[x=0.75pt,y=0.75pt,yscale=-1,xscale=1]

\draw  [fill={rgb, 255:red, 0; green, 0; blue, 0 }  ,fill opacity=1 ] (100,108) .. controls (100,105.24) and (102.24,103) .. (105,103) .. controls (107.76,103) and (110,105.24) .. (110,108) .. controls (110,110.76) and (107.76,113) .. (105,113) .. controls (102.24,113) and (100,110.76) .. (100,108) -- cycle ;
\draw  [fill={rgb, 255:red, 0; green, 0; blue, 0 }  ,fill opacity=1 ] (200,108) .. controls (200,105.24) and (202.24,103) .. (205,103) .. controls (207.76,103) and (210,105.24) .. (210,108) .. controls (210,110.76) and (207.76,113) .. (205,113) .. controls (202.24,113) and (200,110.76) .. (200,108) -- cycle ;
\draw  [line width=1.5]  (105,108) .. controls (105,80.39) and (127.39,58) .. (155,58) .. controls (182.61,58) and (205,80.39) .. (205,108) .. controls (205,135.61) and (182.61,158) .. (155,158) .. controls (127.39,158) and (105,135.61) .. (105,108) -- cycle ;
\draw [line width=1.5]    (105,108) .. controls (142,59.75) and (152.5,136.75) .. (205,108) ;
\draw [line width=1.5]    (105,108) .. controls (145,78) and (171,166.75) .. (205,108) ;
\draw [line width=1.5]    (105,108) .. controls (161.5,117.75) and (151.5,89.25) .. (205,108) ;
\draw [line width=1.5]    (105,108) .. controls (157,131.25) and (143,102.75) .. (205,108) ;
\draw  [draw opacity=0][dash pattern={on 0.84pt off 2.51pt}] (118.23,115.07) .. controls (115.9,119.43) and (111.47,122.5) .. (106.3,122.94) -- (105,108) -- cycle ; \draw  [dash pattern={on 0.84pt off 2.51pt}] (118.23,115.07) .. controls (115.9,119.43) and (111.47,122.5) .. (106.3,122.94) ;  
\draw  [draw opacity=0][dash pattern={on 0.84pt off 2.51pt}] (191.2,102.11) .. controls (193,97.9) and (196.67,94.68) .. (201.17,93.49) -- (205,108) -- cycle ; \draw  [dash pattern={on 0.84pt off 2.51pt}] (191.2,102.11) .. controls (193,97.9) and (196.67,94.68) .. (201.17,93.49) ;  
\draw  [fill={rgb, 255:red, 0; green, 0; blue, 0 }  ,fill opacity=1 ] (275,108) .. controls (275,105.24) and (277.24,103) .. (280,103) .. controls (282.76,103) and (285,105.24) .. (285,108) .. controls (285,110.76) and (282.76,113) .. (280,113) .. controls (277.24,113) and (275,110.76) .. (275,108) -- cycle ;
\draw  [fill={rgb, 255:red, 0; green, 0; blue, 0 }  ,fill opacity=1 ] (375,108) .. controls (375,105.24) and (377.24,103) .. (380,103) .. controls (382.76,103) and (385,105.24) .. (385,108) .. controls (385,110.76) and (382.76,113) .. (380,113) .. controls (377.24,113) and (375,110.76) .. (375,108) -- cycle ;
\draw  [line width=1.5]  (280,108) .. controls (280,80.39) and (302.39,58) .. (330,58) .. controls (357.61,58) and (380,80.39) .. (380,108) .. controls (380,135.61) and (357.61,158) .. (330,158) .. controls (302.39,158) and (280,135.61) .. (280,108) -- cycle ;
\draw [line width=1.5]    (280,108) .. controls (333.25,58.75) and (330.25,110.25) .. (380,108) ;
\draw [line width=1.5]    (280,108) .. controls (329.75,109.75) and (328.75,155.75) .. (380,108) ;

\end{tikzpicture}

		\end{center}
		Then, by Theorem \ref{thm:r-fold-tri}, the repetitive algebra $\hat{A'} = kQ_{\Gamma_{D_2}^{\mathbb{Z}}}/I_\Lambda$ is the AFBGA associated with $(\Gamma_{D_2}^{\mathbb{Z}}, d=2)$, where the quiver $Q_{\Gamma_{D_2}^{\mathbb{Z}}}$ is given as follows.
$$
\begin{tikzcd}
\cdots \arrow[rr, "x_2"]  &  & 1 \arrow[d, "x_1", shift left] \arrow[rr, "y_1"]  &  & 2 \arrow[d, "y_2", shift left] \arrow[rr, "x_2"]  &  & \cdots \\
\cdots \arrow[rr, "y_1"'] &  & 2 \arrow[u, "y_2", shift left] \arrow[rr, "x_2"'] &  & 1 \arrow[u, "x_1", shift left] \arrow[rr, "y_1"'] &  & \cdots
\end{tikzcd}$$
And the $2$-fold trivial extension $T_2(A') = kQ_{\Gamma_{D_2}^{(2)}}/I_\Lambda$ is the AFBGA associated with $(\Gamma_{D_2}^{(2)}, d=2)$, where the quiver $Q_{\Gamma_{D_2}^{(2)}}$ is given as follows.
$$
\begin{tikzcd}
1 \arrow[d, "x_1", shift left] \arrow[r, "y_1", shift left] & 2 \arrow[l, "x_2", shift left] \arrow[d, "y_2", shift left] \\
2 \arrow[u, "y_2", shift left] \arrow[r, "x_2", shift left] & 1 \arrow[u, "x_1", shift left] \arrow[l, "y_1", shift left]
\end{tikzcd}$$
	\end{example}

	\section{Invariants under derived equivalences}\label{sec:4}

	In the case of BGAs, \cite{AZ} and \cite{OZ} provide a complete derived invariant characterizing derived equivalences between BGAs. In this section, we study invariants under derived equivalences for AFBGAs. The situation is analogous to that of BGAs. 
	
	By \cite[Corollary 2.13]{RZ}, two local algebras are derived equivalent if and only if they are Morita equivalent. By Proposition \ref{prop:determined-by-BG}, AFBGAs are completely determined by their associated AFBGs, except for two exceptional local cases. Therefore, we restrict our discussion to the non-local case. We also remark that the following theorem also holds for infinite-dimensional AFBGAs.

	\begin{theorem}\label{thm:derived-eqv-between-BGAs}
 Let $A$ and $B$ be non-local AFBGAs associated with AFBGs $(\Gamma, d)$ and $(\Gamma', d')$, respectively. If $A$ and $B$ are derived equivalent, then the following statements hold:
\begin{enumerate}
\item The associated BGAs $A_{\mathrm{red}}$ and $B_{\mathrm{red}}$ are derived equivalent;
\item The following conditions are satisfied:
\begin{itemize}
\item $\Gamma$ and $\Gamma'$ have the same number of vertices and edges;
\item $(\Gamma,d)$ and $(\Gamma',d')$ have the same multisets of vertex multiplicities;
\item either both $\Gamma$ and $\Gamma'$ are bipartite, or neither is.
\end{itemize}
\end{enumerate}
\end{theorem}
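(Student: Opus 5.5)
The plan is to pass from $A$ and $B$ to their reduced forms through covering theory on derived categories, and then use the known derived invariants of Brauer graph algebras. The key observation is that the reduced form is an orbit algebra. The natural map $\Gamma\to\Gamma_{\mathrm{red}}=\Gamma/\langle\nu\rangle$ is a covering of AFBGs in the sense of \cite[Definition~3.1]{LL2}, so by \cite[Theorem~5.8]{LL2} $A$ is a Galois covering of $A_{\mathrm{red}}$ with group $\langle\nu\rangle$ acting freely. In the same way, unwinding the cutting construction of Proposition~\ref{prop:r-covering}, both $A$ and $A_{\mathrm{red}}$ are orbit algebras of a common $\mathbb{Z}$-covering $\widetilde{A}$, namely an AFBGA of the type $\Gamma^{\mathbb{Z}}_D$. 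This $\widetilde{A}$ is the repetitive algebra of a gentle algebra whenever $m\equiv 1$ on the reduced form, and in general a locally bounded special biserial category. Thus $A\cong\widetilde{A}/\langle\phi^{r}\rangle$ and $A_{\mathrm{red}}\cong\widetilde{A}/\langle\phi\rangle$ for a suitable admissible automorphism $\phi$ of $\widetilde{A}$ lifting $\nu_A$.

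\textbf{Step 1.} Lift the derived equivalence between $A$ and $B$ to the coverings. Following Asashiba \cite{Asa}, a derived equivalence between orbit algebras that is compatible with the group actions lifts to a $G$-equivariant derived equivalence of the coverings. Conversely, a $\langle\phi\rangle$-equivariant derived equivalence $\widetilde{A}\simeq\widetilde{B}$ descends to every orbit algebra. So I would take a tilting complex $T$ over $A$ with $\mathrm{End}(T)\cong B$, replace it by a basic tilting complex, and lift it to a $\langle\phi^r\rangle$-stable tilting complex $\widetilde{T}$ over $\widetilde{A}$, using the push-down functor, which is a Galois covering of derived categories.

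\textbf{Step 2.} Show that $\widetilde{T}$ can be chosen $\langle\phi\rangle$-stable, that is, $\phi^*\widetilde{T}\cong\widetilde{T}$. Then \cite{Asa} yields $\widetilde{A}\simeq\widetilde{B}$ equivariantly with respect to $\langle\phi\rangle$. Descending gives a derived equivalence $A_{\mathrm{red}}\simeq B_{\mathrm{red}}$, which proves (1).

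I expect Step 2 to be the main obstacle, since the Nakayama twist must commute with the equivalence. My first attempt would use that a standard derived equivalence $F$ between self-injective algebras commutes with the Nakayama functor, $F\circ\nu_A\cong\nu_B\circ F$. The Nakayama functor on $\mathcal{D}^b(A)$ lifts to $\phi$ on $\widetilde{A}$, so $\nu$-stability of the image of the tilting complex gives $\phi$-stability of $\widetilde{T}$. Some care is needed to rule out a twist by a non-identity automorphism of the $\mathbb{Z}$-grading group. This should be handled by requiring the lifted equivalence to preserve the degree shift. If necessary I would pass to a tilting complex concentrated in a Nakayama-stable set of summands, as in \cite{Asa1}.

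\textbf{Step 3.} For (2), apply the complete derived invariants for BGAs \cite{A,AZ,OZ} to $A_{\mathrm{red}}\simeq B_{\mathrm{red}}$. These invariants are the numbers of vertices and edges, the multiset of multiplicities, and bipartiteness. Then translate them back to $\Gamma$. Vertices of $\Gamma$ and $\Gamma_{\mathrm{red}}$ coincide, with the same $d$. The edge count of $\Gamma$ equals $r$ times that of $\Gamma_{\mathrm{red}}$. Here $r$ is the order of $\nu$, which is a derived invariant: for self-injective algebras it is the order of the Nakayama functor, equivalently the number of simples is preserved by derived equivalence. Hence the edge counts agree. Multiplicities satisfy $m_\Gamma(\vv)=m_{\mathrm{red}}(\vv)/r$, since valencies scale by $r$. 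Finally, bipartiteness of $\Gamma$ is equivalent to that of $\Gamma_{\mathrm{red}}$, because the two have the same vertex set and the edges of $\Gamma$ lying over a given edge of $\Gamma_{\mathrm{red}}$ join the same pair of vertices. This completes (2).
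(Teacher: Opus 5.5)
Your Step~3 is fine. Your bipartiteness argument, using that $\Gamma\to\Gamma_{\mathrm{red}}$ is the identity on vertices and preserves adjacency, is in fact cleaner than lifting odd cycles, since it gives both directions at once. Steps~1--2, however, contain a genuine gap.

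Asashiba's covering theorem is a \emph{descent} statement. A $G$-stable tilting subcategory over a $G$-cover, identified $G$-equivariantly with the other cover, yields a derived equivalence between the orbit categories. Going up instead requires the given equivalence of orbit algebras to be graded. The push-down functor $\mathcal{K}^b(\mathrm{proj}\,\widetilde{A})\to\mathcal{K}^b(\mathrm{proj}\,A)$ is a Galois covering functor but is not dense. So a tilting complex $T$ over $A$ lifts to a $\langle\phi^r\rangle$-stable tilting complex over $\widetilde{A}$ only if $T$ is gradable with respect to $\widetilde{A}\to A$. Nothing in your argument establishes this for the complex with $\mathrm{End}(T)\cong B$, and it is not automatic. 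Step~1 is therefore unsupported, and Step~2 rests on it.

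The existence of $\widetilde{A}$ is also not justified by unwinding the construction of $\Gamma_D^{(r)}$. That construction only yields covers in which each arrow shifts the sheet index by $0$ or $1$, and a general AFBG over $\Gamma_{\mathrm{red}}$ is not of this form. For instance, take the one-loop graph with $d=2$ and the $\mathbb{Z}/5$-cover in which the two loops shift the sheet index by $2$ and $-1$. This is an AFBG whose quiver (arrows $j\to j+2$ and $j\to j-1$ on $\mathbb{Z}/5$) has no loops, whereas every $\Gamma_D^{(5)}$ over this graph has loops. In particular, your claim that $\widetilde{A}$ is a repetitive algebra of a gentle algebra whenever $m\equiv 1$ fails.

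The detour through $\widetilde{A}$ is unnecessary, and you already have the right ingredients. You observed that $A\to A_{\mathrm{red}}$ is a Galois covering with the finite group $\langle\nu_A\rangle$ acting freely. In Step~2 you invoke the key fact $F\circ\nu_A\cong\nu_B\circ F$ for a standard derived equivalence $F$ between self-injective algebras.

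Apply that fact to $A$ itself rather than to a cover. The tilting complex $T=F^{-1}(B)$ is then automatically $\nu_A$-stable. The equivalence $\mathrm{add}\,T\simeq\mathrm{proj}\,B$ intertwines $\nu_A$ with $\nu_B$, so in particular $\langle\nu_A\rangle\cong\langle\nu_B\rangle$. Now use Asashiba's theorem in its actual (descent) direction with $G=\langle\nu_A\rangle$. It gives that $A/\langle\nu_A\rangle\cong A_{\mathrm{red}}$ is derived equivalent to $B/\langle\nu_B\rangle\cong B_{\mathrm{red}}$, with no lifting or gradability required.

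This is exactly how the paper proves (1). After that, your Step~3 goes through as written.
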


\begin{proof}
Denote by $\nu_A$ (resp.~$\nu_B$) the Nakayama automorphism of $A$ (resp.~$B$). By \cite{GR,Asa}, it is natural to regard such locally bounded quiver algebras as spectroids. Since $A$ and $B$ are derived equivalent, it follows from \cite[Proposition~6.3]{Ric} that there exists a tilting spectroid $E$ for $A$ such that $E$ is equivalent to $B$ as categories. As both $A$ and $B$ are self-injective (Proposition~\ref{prop:bfBGA-basic}), it follows from \cite[Theorem~2.1]{AR} and \cite[Theorem~A.4]{Ai} that $E$ is $\nu_A$-stable. By the definition of AFBGAs, both $\langle \nu_A \rangle$ and $\langle \nu_B \rangle$ are finite cyclic groups (or both isomorphic to $\mathbb{Z}$). Moreover, since derived equivalent self-injective algebras have conjugate Nakayama permutations (\cite[Theorem~4.1]{XZ}), we obtain $\langle \nu_A \rangle \cong \langle \nu_B \rangle$. Therefore, by \cite[Theorem~3.5]{Asa}, the reduced algebra $A_{\mathrm{red}} \cong A/\langle \nu_A \rangle$ is derived equivalent to $B_{\mathrm{red}} \cong B/\langle \nu_B \rangle$.

For an AFBG $(\Gamma,d)$, the ribbon graphs $\Gamma$ and $\Gamma_{\mathrm{red}}$ have the same set of vertices. Moreover, by~\cite[Proposition 4.5]{AZ}, the reduced ribbon graphs $\Gamma_{\mathrm{red}}$ and $\Gamma'_{\mathrm{red}}$ have the same number of vertices. It follows that $\Gamma$ and $\Gamma'$ have the same number of vertices.
Since the number of edges of a ribbon graph corresponds to the number of isomorphism classes of simple modules of the associated algebra, it follows that $\Gamma$ and $\Gamma'$ have the same number of edges.
Assume that the (finite) cyclic group generated by the Nakayama automorphism has order $r$, and that the multiset of vertex multiplicities of $\Gamma_{\mathrm{red}}$ is $\{m_1,\dots,m_n\}$. Then the multiset of vertex multiplicities of $\Gamma$ is given by $\{m_1/r,\dots,m_n/r\}$. Hence, by~\cite[Proposition 4.5]{AZ}, $\Gamma$ and $\Gamma'$ have the same multiset of vertex multiplicities.
Finally, recall that a graph is bipartite if and only if it contains no odd cycles (\cite[Theorem 4.7]{BM}). If $\Gamma_{\mathrm{red}}$ is not bipartite, then since $\Gamma \to \Gamma_{\mathrm{red}}$ is a covering of ribbon graphs (\cite{GSS,LL2}), any odd cycle in $\Gamma_{\mathrm{red}}$ lifts to an odd cycle in $\Gamma$. Hence $\Gamma$ is not bipartite. By~\cite{A,OZ}, it follows that either both or neither of $\Gamma$ and $\Gamma'$ are bipartite.
\end{proof}

We end this section with the following remarks.

\begin{remark}\label{rmk:1}
	Note that the invariants in the above theorem are not complete. For instance, in Example \ref{exa:2-fold-extension}, the algebras $T_2(A)$ and $T_2(A')$ satisfy all the invariants listed above. However, by~\cite{BR,ES}, $T_2(A)$ is $4$-domestic whereas $T_2(A')$ is $2$-domestic. Hence, $T_2(A)$ and $T_2(A')$ are not stably equivalent, and therefore, by~\cite{Ric2}, they are not derived equivalent.
\end{remark}

\begin{remark}\label{rmk:2}
	We expect that a complete set of derived invariants for AFBGAs should extend Theorem~\ref{thm:derived-eqv-between-BGAs} by incorporating information on the faces of the ribbon graph and their perimeters. 
Moreover, by~\cite[Proposition 5.3]{LL3}, for derived equivalent AFBGAs, the $\nu^{-1}(\rho\iota)^2$-orbits on $\Gamma$ and $\Gamma'$ also coincide. However, this invariant still fails to distinguish the two non-derived equivalent algebras in Remark~\ref{rmk:1}. 

Therefore, unlike in the case of BGAs, the existence of a nontrivial Nakayama automorphism $\nu$ makes the situation more subtle. Based on existing experience, a more refined analysis of the relationship between the tubes in the stable Auslander--Reiten quiver of AFBGAs and the faces of the associated ribbon graph will be required in future work.
\end{remark}

\end{document}